\theoremstyle{plain}
\newtheorem{theorem}{Theorem}[section]
\newtheorem{thmIntro}{Theorem}[]
\newtheorem{lemma}[theorem]{Lemma}
\newtheorem{proposition}[theorem]{Proposition}
\newtheorem{fact}[theorem]{Fact}
\newtheorem{question}[theorem]{Question}
\newtheorem{corollary}[theorem]{Corollary}
\theoremstyle{definition}
\newtheorem{definition}[theorem]{Definition}
\newtheorem{ex}[theorem]{Example}
\newtheorem{remark}[theorem]{Remark}
\newcommand{\Z}{\mathbb{Z}}
\newcommand{\N}{\mathbb{N}}
\newcommand{\tp}{\mathrm{tp}}
\newcommand{\abs}[1]{\ensuremath{\left|#1\right|}}
\title{On Superstable Expansions of Free Abelian Groups}
\author{Daniel Palac\'in and Rizos Sklinos}
\address{Institut f\"ur Mathematische Logik und Grundlagenforschung,
    Universit\"at M\"unster;  Einsteinstrasse 62,  M\"unster 48149, Germany}
\email{daniel.palacin@uni-muenster.de}
\address{Institut Camille Jordan UMR5208, Universit\'e de Lyon 1;  43 bd du 11
novembre 1918, Villeurbanne 69622, France}
\email{sklinos@math.univ-lyon1.fr}
\keywords{Model Theory; Superstability; Free abelian groups}
\subjclass[2010]{03C45}
\thanks{The first author was supported by the project Groups, Geometry \&
Actions (SFB 878) and the project MTM2014-59178-P.
The second named author was supported by the LABEX MILYON (ANR-10-LABX-0070) of
Universit\'e de Lyon, within the program "Investissements d'Avenir"
(ANR-11-IDEX-0007) operated
by
the French National Research Agency (ANR).
We would like to thank Bruno Poizat for providing a reference of his result, as well as
Anand Pillay  and Uri
Andrews for their valuable comments. Finally, we are indebted to Gabe Conant for pointing out some mistakes towards the proof of Theorem 2 in an earlier version of this article.}
\begin{document}

\begin{abstract}
We prove that $(\Z,+,0)$ has no proper superstable expansions of finite Lascar
rank. Nevertheless, this structure equipped with a predicate defining powers of
a given natural number is superstable of Lascar rank $\omega$. Additionally,
our methods yield other superstable expansions such as $(\Z,+,0)$ equipped with
the set of factorial elements.
\end{abstract}

\maketitle


\section{Introduction}\label{intro}
This paper fits into the general framework of adding a new predicate to a well
behaved structure and
asking whether the obtained structure is still well behaved. A similar line of
thought is to impose the desired
properties on the expanded structure and ask for which predicates these
properties are fulfilled. Even more, one might
ask whether there exist proper expansions fulfilling the desired properties.

Many results that belong to the above mentioned framework have been obtained by
various authors. For example
Pillay and Steinhorn proved that there are no (proper) o-minimal expansions of
$(\N,\leq)$. On the other hand,
Marker \cite{MarkerExp} proved that there are (proper) strongly minimal
expansions of $(\N,s)$, i.e. the natural numbers with the successor function.
In a more abstract context Baldwin and Benedikt proved that if $\mathcal{M}$ is
a stable structure and $I$
is a {\em small} set of indiscernibles then $(\mathcal{M},I)$ is still stable.
Finally, Chernikov and Simon \cite{SimCheExt} proved the analogous
result for NIP theories, i.e. NIP is preserved after naming a {\em small}
indiscernible sequence.

In this short paper we are mainly interested in (finitely generated) free
abelian groups. We are motivated by
the recent addition of torsion-free hyperbolic groups to the family of
stable groups (see \cite{SelaStability}). In a torsion-free hyperbolic group
centralizers of
(non-trivial) elements are infinite cyclic and one is interested in the induced
structure on them.
It seems that understanding the induced structure on these centralizers boils
down to understanding whether they
are superstable and if so calculate their Lascar rank.

Our main result generalizes
a theorem in the thesis of the second named author proving that every Lascar
rank $1$
expansion of $(\Z,+,0)$ is a pure group (see \cite[Theorem
8.2.3]{ThesisSklinos}).

\begin{thmIntro}\label{MainTheorem}
There are no (proper) superstable finite Lascar rank expansions of $(\Z,+,0)$.
\end{thmIntro}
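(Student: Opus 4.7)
The plan is to argue that any superstable expansion $M$ of $(\Z,+,0)$ of finite Lascar rank is one-based, and then to invoke the Hrushovski--Pillay characterization of definable sets in one-based stable groups as Boolean combinations of cosets of $\bigwedge$-definable subgroups. Combined with the observation that every $M$-definable subgroup of $\Z^n$ is, in particular, an abstract subgroup of the finitely generated free abelian group $\Z^n$, hence finitely generated and already $\emptyset$-definable in $(\Z,+,0)$, this forces every $M$-definable set to be definable in $(\Z,+,0)$, contradicting properness of the expansion.

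Before tackling one-basedness, I would dispatch the subgroup-rigidity step: every $M$-definable subgroup of $\Z^n$ is an abstract subgroup of $\Z^n$, hence cut out by finitely many linear equations, hence already definable in the pure group. Some care is needed to transfer this observation from $\Z^n$ to definable subgroups of the saturated monster model $\mathcal{M}^n$ of $M$, where Hrushovski--Pillay is actually applied, but this is a routine descent argument inside the stable pure group $(\Z,+,0)$, whose definable subgroups of $\mathcal{M}^n$ are classified via pp-formulas.

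The heart of the proof is one-basedness of $M$. For Lascar rank $1$ this is the content of Sklinos's thesis \cite[Theorem 8.2.3]{ThesisSklinos}. For the inductive step I would take a generic type $p\in S(\emptyset)$ of rank $k\geq 2$, extract a rank-$(k-1)$ forking extension of $p$ over a parameter, and use Lemma~\ref{LemmaInd2} together with the group operation to show that the canonical base of $p$ lies in the algebraic closure of a single realization. The aim is to propagate one-basedness from the rank-$1$ ``floor'' to each Lascar-rank level via the group structure.

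The main obstacle is precisely this propagation. The finite Lascar rank hypothesis is essential: the rank-$\omega$ expansions constructed later in the paper (notably $(\Z,+,0)$ equipped with the powers of a fixed integer, or with the factorials) are superstable but manifestly not one-based, the sparse predicates obstructing any decomposition into cosets of subgroups. So the finiteness of rank must be leveraged directly to rule out such ``nonmodular'' predicates, rather than via any soft general principle. I expect Lemma~\ref{LemmaInd2} to provide the key independence statement: that forking witnesses in $M$ are, up to acl, already controlled by forking witnesses in the pure group, so that a new predicate incompatible with the coset structure would necessarily generate an infinite descending chain of forking extensions and contradict finiteness of the Lascar rank.
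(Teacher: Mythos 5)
Your endgame coincides with the paper's: one-basedness plus the Hrushovski--Pillay coset description, combined with the fact that every subgroup of $\Z^n$ is definable in the pure group, does finish the proof, and your subgroup-rigidity discussion is essentially the paper's preliminary lemma. The genuine gap is the central step, one-basedness, which you leave as a plan rather than a proof. The induction you sketch --- propagating one-basedness up the Lascar-rank levels via canonical bases, with the rank-one case quoted from the thesis --- does not get off the ground: the thesis result says that a rank-one \emph{expansion} of $(\Z,+,0)$ is a pure group, which is not a statement about rank-one \emph{types} inside a higher-rank expansion, and there is no natural way to run an induction ``level by level'' on the rank of the structure. Moreover Lemma~\ref{LemmaInd2}, which you invoke twice as the expected source of the key independence statement, concerns the structure $(\N,s,\{Q_{k,n}\}_{n<\omega,k<n})$ and is used only to analyse the structure induced on $\Pi_q$ in the proof of Theorem~\ref{MainTheorem2}; it says nothing about forking in an arbitrary finite-rank expansion and cannot play the role you assign to it. The assertion that a predicate incompatible with the coset structure ``would generate an infinite descending chain of forking extensions'' is exactly what has to be proved, and no argument is offered for it.

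For comparison, the paper obtains one-basedness as follows. In a sufficiently saturated model $\Gamma$ of the expansion, finiteness of the Lascar rank makes the principal generic non-orthogonal to some type $q$ of Lascar rank one, and the general theory of superstable groups yields an $\emptyset$-definable normal subgroup $H$ of infinite index such that $\Gamma/H$ is internal to the family $\mathcal Q$ of $\emptyset$-conjugates of $q$. Since $H$ is defined without parameters and the only infinite-index subgroup of $\Z$ is trivial, $H$ is trivial, so $\Gamma$ itself is $\mathcal Q$-internal. As the expansion cannot be $\omega$-stable (its reduct $(\Z,+,0)$ already is not), Buechler's dichotomy forces $q$ and its conjugates to be one-based, and Wagner's theorem on internality in a family of one-based types gives one-basedness of $\Gamma$, hence of the theory. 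This is precisely where the specific structure of $\Z$ (no proper infinite-index subgroups) and the finite-rank hypothesis enter; some input of this kind is what your sketch is missing, and without it the proposal does not constitute a proof.
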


We also show that one cannot strengthen the above result any further by proving:

\begin{thmIntro}\label{MainTheorem2}
The theory of $(\Z,+,0,\Pi_q)$ is superstable of Lascar rank $\omega$, where
$\Pi_q$ denotes the set of powers of a natural number $q$.
\end{thmIntro}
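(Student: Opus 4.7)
The plan is to prove the theorem in three stages: (i) establish quantifier elimination in a suitable expansion of the language, (ii) deduce superstability from the resulting description of complete types, and (iii) compute the Lascar rank as exactly $\omega$.

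For (i), I would enlarge the language $\{+,0,\Pi_q\}$ by the Presburger divisibility predicates $D_n(x) \equiv n\mid x$, together with whatever auxiliary $q$-adic relations are required (for instance a predicate recording the $q$-adic valuation, or one expressing that $x$ is a signed sum of a given number of distinct powers of $q$). The combinatorial engine for the QE is the uniqueness of base-$q$ representations: for each fixed $N$, the equation $\sum_i c_i q^{n_i} = 0$ with integer coefficients $|c_i|\leq N$ confines the exponent tuple $(n_1,\dots,n_k)$ to finitely many linear patterns (which are themselves definable in the pure Presburger reduct). This controls precisely how addition in $\Z$ can create or destroy elements of $\Pi_q$, and it should yield that every formula is equivalent, modulo $T$, to a Boolean combination of Presburger conditions on $\Z$-linear terms $t(\bar x)$ and membership assertions $\Pi_q(t(\bar x))$.

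From this description, a complete $1$-type over a model $M$ is determined by its Presburger reduct together with, for each $\Z$-linear term $t(x,\bar a)$ with $\bar a \in M$, the truth value of $\Pi_q(t(x,\bar a))$. The sparsity of $\Pi_q$ bounds, for generic $x$, the number of such memberships that can hold nontrivially; hence only countably many types arise over any countable model, which yields stability, and the finiteness of nontrivial memberships in each type gives superstability. Forking in this picture corresponds exactly to newly asserting $\Pi_q(t(x,\bar a))$ for some $\Z$-linear term not already constrained over the base.

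For the Lascar rank, the lower bound $\mathrm{U}(T) \geq \omega$ is witnessed by the formulas $\psi_k(x)$ saying that $x$ is a signed sum of exactly $k$ distinct elements of $\Pi_q$: by the uniqueness-of-representations lemma each extra summand is an independent forking step, so $\psi_k$ supports a type of rank at least $k$, and the generic type (of the pure group) then has rank at least $\omega$. The upper bound $\mathrm{U}(T)\leq \omega$ is the main obstacle: once the finitely many $\Pi_q$-memberships within a type are pinned down, the remaining degrees of freedom must be shown to reduce to the Presburger reduct and hence to have finite rank, ruling out any type of rank $\omega+1$ or higher. This is where the sharp form of the QE must be invoked, and where I expect the bulk of the technical work to lie.
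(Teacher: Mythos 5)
Your proposal has genuine gaps at each of its three stages. First, the quantifier-elimination target is false as stated: the set $\Pi_q+\Pi_q$, defined by $\exists y\,\exists z\,(y\in P\wedge z\in P\wedge x=y+z)$, has counting function of order $(\log N)^2$ up to $N$, whereas any one-variable Boolean combination of Presburger sets and sets of the form $\{x : t(x)\in\Pi_q\}$ is either of positive lower density or of counting function $O(\log N)$; so quantifiers ranging over the predicate cannot be eliminated in the language you describe, and pinning down the ``auxiliary $q$-adic relations'' that would repair this is precisely the technical work you have not carried out. The paper avoids full QE altogether: using nfcp of $(\Z,+,0)$ together with a sparseness lemma (finite unions of sets $\{k+l\alpha : \alpha\in\Pi_q\}$ cannot cover a coset of a nontrivial subgroup) it shows every $\mathcal{L}_P$-formula is equivalent to a \emph{bounded} one, i.e.\ one whose quantifiers are relativized to $P$, and then applies the Casanovas--Ziegler theorem for pairs. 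Second, your type count is wrong: already $(\Z,+,0)$ has $2^{\aleph_0}$ one-types over $\emptyset$ coming from congruence conditions, so ``only countably many types over a countable model'' fails for the reduct, let alone the expansion; the theory is superstable but not $\omega$-stable. What must actually be proved --- and what your proposal never touches --- is that the induced structure on $\Pi_q$ is superstable; the paper does this by analysing $\Pi_q$ modulo $n$ (Euler's theorem) and linear equations among powers of $q$, interpreting $\Pi_q^{\rm ind}$ in $(\N,s,\{Q_{k,n}\}_{n<\omega,k<n})$, and proving QE and U-rank one there.

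Third, you explicitly leave the upper bound $\mathrm{U}\le\omega$ open, and it is not a routine remainder: the paper closes it by noting that, as a consequence of boundedness, $\mathcal{L}_P$-types over $C$ are determined by $\mathcal{L}$-types over $\Pi_q'\cup C$, so any forking extension of the principal generic forces its realization into ${\rm acl}(\Pi_q',B)$ in the sense of $(\Z,+,0)$, and such elements have finite Lascar rank because finite tuples from $\Pi_q'$ do (the induced structure having rank one). Your lower-bound idea via sums of $k$ distinct powers is sound in spirit (the paper mentions it as an alternative and leaves it to the reader, instead deducing $\mathrm{U}\ge\omega$ from Theorem 1), but without the boundedness reduction and the analysis of $\Pi_q^{\rm ind}$ neither superstability nor the rank computation goes through.
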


In fact, our methods can be used to provide other superstable expansions
by adding other sets such as sets of the form $\{q^{p^n}\}_{n<\omega}$ for
some natural
numbers $p,q$ or the set of factorial elements, see Proposition
\ref{Fac}.
On the other hand, if one moves to higher rank free abelian groups Theorem
\ref{MainTheorem} is no longer true, and it is not hard to
find proper superstable Lascar rank $1$ expansions of $(\Z^n,+,0)$, for $n\geq
2$. The main reason being that there exist finite index subgroups
of $\Z^n$ (for $n\geq 2$) that are not definable in $(\Z^n,+,0)$. Still, we
record that a
superstable finite Lascar rank expansion of $(\Z^n,+,0)$ is one-based and has
Lascar rank at most $n$.

While checking our results, the second author figured out in a talk of Bruno
Poizat that Theorem \ref{MainTheorem2} was already proved in \cite[Th\'eor\`eme
25]{PoizatSuperGen}. Nevertheless, as both approaches are completely distinct we
believe that it is worth recording our result since, as we have
already pointed out, it yields distinct examples. Moreover, to our
knowledge, Theorem \ref{MainTheorem} was unknown. The essential tools to prove
it come from geometric stability. We combine results from Hrushovski's thesis
together with Buechler's dichotomy theorem, the characterization of one-based
groups by Hrushovski-Pillay and a result on one-based types due to Wagner.

\section{Finite rank expansions}

The aim of this section is to study superstable expansions of finite Lascar
rank of the structure $(\Z^n,+,0)$. We assume the reader is familiarized with
the general theory of geometric stability, see
\cite{PillayStability,WagnerStableGroups} as a reference. In addition we
require the following result which characterizes
subgroups of finitely generated free abelian groups.

\begin{fact}\label{FreeAbelSubs}
Let $G$ be a subgroup of $\Z^n$. Then there is a basis $(z_1,\ldots,z_n)$ of
$\Z^n$ and a sequence of
natural numbers $d_1,\ldots,d_k$ (with $d_i$ dividing $d_{i+1}$ for $i<k$), such
that $(d_1z_1,\ldots,d_kz_k)$
forms a basis of $G$.
\end{fact}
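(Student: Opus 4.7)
The plan is to argue by induction on $n$, with the base case $n=1$ following from the principal ideal structure of $\Z$. For the inductive step, assuming $G\neq 0$, I would range over all group homomorphisms $\phi\colon \Z^n\to\Z$ and choose $\phi_1$ so as to minimize the positive generator, call it $d_1$, of $\phi_1(G)\subseteq \Z$. I then pick $g_1\in G$ with $\phi_1(g_1)=d_1$. This minimality will be the engine driving both the inductive split and the divisibility condition.

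The first technical step is to show that $g_1=d_1 z_1$ for a primitive $z_1\in\Z^n$. A short B\'ezout argument does this: if $e$ denotes the gcd of the coordinates of $g_1$ in the standard basis, then writing $e$ as a $\Z$-linear combination of those coordinates produces a functional $\psi\in\mathrm{Hom}(\Z^n,\Z)$ with $\psi(g_1)=e$, so the positive generator of $\psi(G)$ divides $e$ and is at most $e$; minimality of $d_1$ forces $d_1\leq e$, while clearly $d_1$ is itself a $\Z$-linear combination of the coordinates of $g_1$, so $e\mid d_1$. Hence $e=d_1$, giving the desired primitive $z_1$, which automatically satisfies $\phi_1(z_1)=1$. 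This yields the splitting $\Z^n=\Z z_1\oplus\ker\phi_1$ with $\ker\phi_1\cong \Z^{n-1}$, and a one-line check shows $G=\Z g_1\oplus(G\cap\ker\phi_1)$: any $h\in G$ has $\phi_1(h)=md_1$ for some $m\in\Z$, and then $h-mg_1\in G\cap\ker\phi_1$. The inductive hypothesis applied to $G\cap\ker\phi_1\leq\ker\phi_1$ produces a basis $(z_2,\ldots,z_n)$ and integers $d_2\mid\cdots\mid d_k$ giving a basis $(d_2z_2,\ldots,d_kz_k)$ of $G\cap\ker\phi_1$; concatenation delivers the required bases of $\Z^n$ and~$G$.

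The main obstacle, and really the only nontrivial point, is the divisibility $d_1\mid d_2$. I would argue by contradiction: if $d_2=qd_1+r$ with $0<r<d_1$, consider the functional $\phi\in\mathrm{Hom}(\Z^n,\Z)$ defined on the basis by $\phi(z_1)=\phi(z_2)=1$ and $\phi(z_i)=0$ for $i\geq 3$. Then $\phi(G)$ contains both $d_1$ and $d_2$, hence contains $\gcd(d_1,d_2)\leq r<d_1$, contradicting the minimal choice of $d_1$. Thus once the minimality of $\phi_1$ is set up correctly at the outset, both the primitive splitting and the divisibility $d_1\mid d_2$ fall out of essentially the same B\'ezout/gcd observation, and the induction closes.
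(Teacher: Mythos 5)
Your argument is correct: it is the classical proof of the stacked (adapted) basis theorem, running induction on $n$ via a functional $\phi_1$ minimizing the positive generator of $\phi(G)$, and both the primitivity of $z_1$ and the divisibility $d_1\mid d_2$ are handled properly by the B\'ezout/minimality device. There is nothing in the paper to compare it with: the statement is recorded there as a Fact without proof, being standard (it is equivalent to Smith normal form for integer matrices), so your write-up supplies a proof the authors simply cite as known. One small point you gloss over is the claim $\ker\phi_1\cong\Z^{n-1}$, which is needed before the inductive hypothesis can be invoked; it does not follow from the splitting alone without a word of justification. It is routine --- e.g.\ prove separately, by an easier induction on $n$, that every subgroup of $\Z^n$ is free of rank at most $n$, or note that $\ker\phi_1$ is a direct summand of $\Z^n$, hence finitely generated and torsion-free, hence free, of rank $n-1$ by tensoring with $\Q$ --- but a sentence to this effect should be added so the induction closes cleanly.
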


One can use Fact \ref{FreeAbelSubs} to prove the following lemma, which we
consider as being part of the folklore.

\begin{lemma}
Let $G$ be a subgroup of $\Z^n$. Then $G$ is definable in $(\Z,+,0)$.
\end{lemma}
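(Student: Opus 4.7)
The plan is to reduce the problem, via Fact \ref{FreeAbelSubs}, to a ``diagonal'' form in a suitable basis, and then to note that each ingredient of that normal form is definable in the pure group language.

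First, I would apply Fact \ref{FreeAbelSubs} to obtain a basis $(z_1,\ldots,z_n)$ of $\Z^n$ and natural numbers $d_1,\ldots,d_k$ such that $(d_1z_1,\ldots,d_kz_k)$ is a basis of $G$. Equivalently, writing $M\in GL_n(\Z)$ for the matrix whose columns are $z_1,\ldots,z_n$, we have
\[
G \;=\; M\bigl(d_1\Z \times \cdots \times d_k\Z \times \{0\}^{n-k}\bigr).
\]
So an element $a\in\Z^n$ lies in $G$ if and only if the coordinates $(b_1,\ldots,b_n) := M^{-1}a$ satisfy $d_i\mid b_i$ for $i\leq k$ and $b_i=0$ for $i>k$.

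Now each entry of $M^{-1}$ is an integer (since $M\in GL_n(\Z)$), so every $b_i$ is an explicit $\Z$-linear combination of the coordinates of $a$; in particular each $b_i$ is given by a term in the language $\{+,0\}$. Divisibility by a fixed positive integer $d$ is definable in $(\Z,+,0)$ by the formula $\exists y\,(x = \underbrace{y+\cdots+y}_{d \text{ times}})$, and equality to $0$ is trivially definable. Conjoining these conditions over $i=1,\ldots,n$ yields a formula $\varphi(x_1,\ldots,x_n)$ in $\{+,0\}$ that defines $G$ as a subset of $\Z^n$.

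There is no real obstacle here: the whole argument is a mechanical unpacking of the Smith-normal-form statement given by Fact \ref{FreeAbelSubs}, combined with the elementary observations that integer linear combinations are terms and that each subgroup $d\Z$ of $\Z$ is definable. This is exactly why the authors describe the lemma as folklore.
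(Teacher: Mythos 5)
Your proof is correct and follows exactly the route the paper intends: the lemma is stated there without proof, with only the remark that it follows from Fact \ref{FreeAbelSubs} (Smith normal form), which is precisely the argument you carry out, unwinding the normal form into congruence and vanishing conditions on the $\Z$-linear coordinates given by $M^{-1}$. The only cosmetic point is that these linear forms may have negative coefficients, so one should either use the (definable) additive inverse or move the negative terms to the other side of the equation before writing the divisibility formula; this does not affect correctness.
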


Now, we prove Theorem \ref{MainTheorem}.

\begin{proof}[Proof of Theorem 1]
Consider a finite Lascar rank expansion $\mathcal Z=(\Z,+,0,\ldots)$
of
$(\Z,+,0)$, and let $\Gamma\succeq\mathcal Z$ be an enough saturated
elementary extension. As $\Gamma$ has finite Lascar rank, its principal generic
type is non-orthogonal to a type $q$ of Lascar rank one and hence, we can find
an $\emptyset$-definable normal subgroup $H$ of infinite index in $\Gamma$ in a
way that $\Gamma/H$ is $\mathcal Q$-internal, where $\mathcal Q$ is the
family of all $\emptyset$-conjugates of $q$. In fact, since $H$ is
defined without parameters, the subgroup $H\cap\Z$ has infinite index
in $\Z$, hence $H\cap \Z$ must be trivial, and so is $H$. This yields
that $\Gamma$ is
$\mathcal Q$-internal. On the other hand, as $\Gamma$ is not
$\omega$-stable, by Buechler's dichotomy theorem $q$ must be a one-based
type and so are all its conjugates. Thus $\Gamma$ is
one-based by \cite[Corollary 12]{WagnerOnebased}, and so is the theory of
$\mathcal Z$. Thus, by the characterization of
one-based stable groups \cite[Corollary 4.4.8]{PillayStability}, every
definable subset of $\Z^n$ in the expanded structure is a boolean combination of
cosets of definable subgroups of $\Z^n$ and therefore, any definable set in
the theory of $\mathcal Z$ is already definable in the theory of $(\Z,+,0)$ by
the previous lemma, as
desired.
\end{proof}

We note, in contrast, that not all subgroups of $\Z^n$ are definable in
$(\Z^n,+,0)$. For example, the finite index subgroup $3\Z\oplus 2\Z$ of
$\Z^2$ is not definable in $(\Z^2,+,0)$, and of course any non-trivial infinite
index subgroup of $\Z^n$, for $n\geq 2$, is not definable in $(\Z^n,+,0)$.

\begin{theorem}
Any finite Lascar rank expansion of $(\Z^n,+,0)$ is one-based and has Lascar
rank at most $n$.
\end{theorem}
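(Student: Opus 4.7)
The plan is to adapt the proof of Theorem \ref{MainTheorem}: one-basedness will follow by precisely the same argument, while the Lascar rank bound will be obtained by pulling a composition series of $\emptyset$-definable subgroups of a sufficiently saturated elementary extension down to $\Z^n$ and applying Fact \ref{FreeAbelSubs}.

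To set things up, I would fix a finite Lascar rank expansion $\mathcal{Z}=(\Z^n,+,0,\ldots)$ and a sufficiently saturated $\Gamma\succeq\mathcal{Z}$. For one-basedness I would repeat verbatim the opening of the proof of Theorem \ref{MainTheorem}: since $(\Z^n,+,0)$ is not $\omega$-stable, neither is $\Gamma$, so Buechler's dichotomy forces every $U$-rank one type of $\Gamma$ to be one-based; combined with finite Lascar rank, this yields one-basedness of the whole theory of $\mathcal{Z}$ via \cite[Corollary 12]{WagnerOnebased}.

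For the rank bound, I would use that a one-based superstable group of finite Lascar rank $r$ admits a strict chain of $\emptyset$-definable subgroups $\Gamma=H_0>H_1>\cdots>H_r=\{0\}$ whose successive quotients $H_i/H_{i+1}$ all have Lascar rank one, obtained by iterating the Hrushovski lemma used in Theorem \ref{MainTheorem}. Intersecting this chain with $\Z^n$ gives $\Z^n=H_0\cap\Z^n\supseteq H_1\cap\Z^n\supseteq\cdots\supseteq H_r\cap\Z^n=\{0\}$; this chain is strict because the sentence ``$H_i=H_{i+1}$'' is first-order and transfers from $\Gamma$ down to $\mathcal{Z}$, and each step has infinite index for the analogous reason applied to the sentences $[H_i:H_{i+1}]>k$ for all $k$ (here $[H_i:H_{i+1}]=\infty$ because $U(H_i/H_{i+1})=1$). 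Fact \ref{FreeAbelSubs} then forces each infinite-index inclusion between subgroups of $\Z^n$ to strictly decrease the free rank, which starts at $n$ and ends at $0$; therefore $r\leq n$.

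The main obstacle I expect is justifying the existence of the composition chain realizing the Lascar rank with each quotient of rank exactly one; this requires a careful iteration of Hrushovski's construction inside each successive $\emptyset$-definable subgroup, possibly with further refinements splitting off rank-one factors, and uses one-basedness to ensure that Lascar rank behaves additively across the chain. The remainder is essentially a bookkeeping exercise combining elementarity $\mathcal{Z}\prec\Gamma$ with Fact \ref{FreeAbelSubs}.
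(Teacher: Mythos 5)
There are two genuine problems with your plan, one in each half.

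For one-basedness, the inference ``$\Gamma$ is not $\omega$-stable, so Buechler's dichotomy forces every $U$-rank one type to be one-based'' is not valid. Buechler's dichotomy is a statement about an individual minimal type: it is either locally modular or of Morley rank one, and global non-$\omega$-stability of the theory does not rule out the second alternative for a particular type (consider, say, a two-sorted structure consisting of a copy of $(\Z,+)$ together with an algebraically closed field with no interaction: it is superstable of finite rank, not $\omega$-stable, yet the generic type of the field sort is a rank-one type that is not one-based). What the paper actually does is produce, from finiteness of the Lascar rank, a chain of $\emptyset$-definable subgroups $\Gamma=H_0\unrhd\cdots\unrhd H_{m+1}\unrhd\{0\}$ whose factors are infinite and internal to families of conjugates of rank-one types $q_i$; it then uses Fact \ref{FreeAbelSubs} to see that no infinite quotient of $\Z^n$ is $\omega$-stable, hence (since the $H_i$ are $\emptyset$-definable and so trace on $\Z^n$) the factors $H_i/H_{i+1}$ have no ordinal Morley rank, hence neither do the $q_i$, and only then does Buechler apply to make each $q_i$ one-based, after which Wagner's result gives one-basedness of $\Gamma$. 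Your sketch omits exactly the step that ties the rank-one types to quotients of $\Z^n$, and without it the application of Buechler and of \cite[Corollary 12]{WagnerOnebased} is unjustified.

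The rank bound is worse: the chain you posit does not exist in general. The general theory (and the Hrushovski-type lemma you invoke) gives factors that are \emph{internal to families of rank-one types}, not factors of Lascar rank one, and these are very different. Already for the pure group $(\Z^2,+,0)$ -- which is a finite-rank expansion of itself, of rank $2$ -- every definable subgroup of a saturated model is trivial or of finite index (by quantifier elimination for modules, the pp-definable subgroups in one variable are $\{0\}$, $d\Gamma$ and $\Gamma$, and any infinite definable subgroup, being a Boolean combination of cosets of these, contains some $d\Gamma$); since type-definable subgroups of a stable group are intersections of definable ones, there is no definable or type-definable subgroup of $U$-rank one at all, let alone an $\emptyset$-definable chain $\Gamma>H_1>\{0\}$ with rank-one quotients. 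So the ``intersect with $\Z^n$ and count free ranks via Fact \ref{FreeAbelSubs}'' step has nothing to run on, and one-basedness does not repair this, since by Hrushovski--Pillay it only controls definable \emph{sets} in terms of cosets of acl$(\emptyset)$-definable subgroups, not the existence of intermediate subgroups of prescribed rank. The paper's actual route to the bound is different: having established one-basedness, it observes that $\mathcal Z$ is interpretable in $\mathcal Z_{\mathrm{proj}}=(\Z^n,+,0,P_1,\ldots,P_n)$ (the coordinate projections), which is interpretable in $(\Z,+,0)$ and has Lascar rank $n$, whence $U(\mathcal Z)\le n$. You would need to replace your chain argument by something of this kind.
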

\begin{proof}
Consider a finite Lascar rank expansion
$\mathcal Z=(\Z^n,+,0,\ldots)$ of $(\Z^n,+,0)$. A similar argument as in the
previous theorem yields that the theory of $\mathcal Z$ is one-based. For this,
let $\Gamma\succeq\mathcal Z$ be an enough saturated model. As it has finite
Lascar rank by assumption, the general theory yields the existence of a finite
series of $\emptyset$-definable normal subgroups
$$\Gamma= H_0\unrhd H_1 \unrhd \ldots \unrhd H_{m+1}\unrhd \{0\}$$
such that $H_{m+1}$ is finite and each factor $H_i/H_{i+1}$ is infinite and internal
to a family $\mathcal Q_i$ of $\emptyset$-conjugates of some type $q_i$ of
Lascar rank one. Since free abelian groups are torsion-free they do not have any
finite (non-trivial) subgroups, and so neither does $\Gamma$. This implies that
$H_{m+1}$ is trivial. Furthermore, by Fact \ref{FreeAbelSubs} we obtain
that no infinite quotient of $\Z^n$ is $\omega$-stable. As all subgroups
$H_i$ are $\emptyset$-definable, we deduce that the quotients $H_i/H_{i+1}$
cannot have ordinal Morley rank, and neither do the types from the families
$\mathcal Q_i$. Whence, we conclude by Buechler's
dichotomy theorem that all of them are one-based, and so is $\Gamma$ again by
\cite[Corollary 12]{WagnerOnebased}.

To see that the expansion $\mathcal Z$ has Lascar rank at most $n$, consider
the structure $\mathcal Z_{\mathrm{proj}}$ given as $(\Z^n,+,0,P_1,\ldots,P_n)$,
where the predicate $P_i$ is interpreted as the projection of $\Z^n$ onto its
$i$th coordinate. It is clear that $\mathcal Z_{\mathrm{proj}}$ is interpretable
in $(\Z,+,0)$ and so it has Lascar rank $n$. On the other hand, since $\mathcal
Z$ is one-based, it is interpretable in $\mathcal Z_{\mathrm{proj}}$ by the
characterization of one-based stable groups \cite[Corollary
4.4.8]{PillayStability} and thus, it has Lascar rank at most $n$.
\end{proof}

\begin{remark}
Observe that the proof yields that any superstable finite Lascar rank expansion
of $(\Z^n,+,0)$ is interpretable in the structure $\mathcal Z_{\mathrm{proj}}$.
\end{remark}

\section{Superstable expansions of $(\Z,+,0)$}

In this section we shall see that there are proper superstable expansions of
$(\Z,+,0)$, necessarily, by Theorem \ref{MainTheorem}, of infinite Lascar rank.

\begin{definition}
Let $\mathcal{L}$ be a first-order language and $P(x)$ a unary predicate. We
denote by $\mathcal{L}_P$ the first-order language $\mathcal{L}\cup\{P\}$.
We say that an $\mathcal{L}_{P}$-formula $\phi(\bar{y})$ is bounded (with
respect to $P$) if it has the form
$$
Q_1x_1\in P\ldots Q_nx_n\in
P \, \psi(\bar{x},\bar{y}),
$$
where the $Q_i$'s are quantifiers and $\psi(\bar{x},\bar{y})$ is an
$\mathcal{L}$-formula.
\end{definition}

The following theorem will be useful for proving Theorem \ref{MainTheorem2}, we
refer the reader to \cite{CasaZie} for the proof.

\begin{theorem}\label{FactCasZieg}
Let $\mathcal{M}$ be an $\mathcal{L}$-structure and $A\subseteq M$. Consider
$(\mathcal{M},A)$ as a structure in the
expanded language $\mathcal{L}_{P}:=\mathcal{L}\cup\{P\}$.
Suppose every $\mathcal{L}_{P}$-formula in $(\mathcal{M},A)$ is equivalent to a
bounded one. Then, for every $\lambda\geq\abs{\mathcal{L}}$, if
both $\mathcal{M}$ and $A_{ind}$ are $\lambda$-stable, then $(\mathcal{M},A)$
is $\lambda$-stable.
\end{theorem}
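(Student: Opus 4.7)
The plan is to bound $|S^{\mathcal{L}_P}(B)| \leq \lambda$ for every parameter set $B \subseteq M$ of cardinality at most $\lambda$. By hypothesis, every $\mathcal{L}_P$-formula is equivalent modulo $\mathrm{Th}(\mathcal{M}, A)$ to a bounded one, so a complete $\mathcal{L}_P$-type over $B$ is determined by the collection of bounded formulas with parameters in $B$ that it contains. It therefore suffices to count these complete bounded-formula types.

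I would decompose the bounded-formula type of a tuple $\bar a$ over $B$ into two pieces: (i) its pure $\mathcal{L}$-type $\tp^{\mathcal{L}}(\bar a / B)$ in $\mathcal{M}$; and (ii) the \emph{trace data} of $\bar a$ on $A$, namely, for each $\mathcal{L}$-formula $\psi(\bar x, \bar y, \bar z)$ and each $\bar b \in B$, the set $\psi(A^{|\bar x|}, \bar a, \bar b) \subseteq A^{|\bar x|}$ regarded as a definable subset inside $A_{ind}$. The key observation is that the truth value of a bounded formula $Q_1 x_1 \in P \ldots Q_n x_n \in P\, \psi(\bar x, \bar a, \bar b)$ depends only on this trace together with the quantifier pattern, so the pair (i)+(ii) determines the bounded-formula type of $\bar a$ over $B$. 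Piece (i) contributes at most $\lambda$ possibilities by $\lambda$-stability of $\mathcal{M}$.

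The main obstacle lies in controlling (ii). A naive count, treating each pair $(\psi, \bar b)$ independently, would yield $\lambda^{\lambda} = 2^\lambda$ possibilities, which is too large. The delicate point is that the various traces are not independent: they all arise from the fixed tuple $\bar a$ in $\mathcal{M}$, and stability forces coherence among them. My plan is to package all the traces into a single $A_{ind}$-type by choosing an auxiliary tuple $\bar c \in A^{<\omega}$—constructed, for instance, via a coheir extension inside $A$ relative to a small base in $A \cap B$—whose $A_{ind}$-type simultaneously encodes the traces $\psi(A^{|\bar x|}, \bar a, \bar b)$ for all relevant $\psi$ and $\bar b$. Since $A_{ind}$-types over a set of cardinality at most $\lambda$ are themselves at most $\lambda$ in number by $\lambda$-stability of $A_{ind}$, combining with piece (i) produces the desired bound $\lambda \cdot \lambda = \lambda$, proving $\lambda$-stability of $(\mathcal{M},A)$.
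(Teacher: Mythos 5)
First, note that the paper does not prove Theorem \ref{FactCasZieg} at all: it is quoted from Casanovas--Ziegler \cite{CasaZie}, so your attempt has to be measured against the argument there. Your first reduction is fine and matches the intended use of boundedness (it is essentially what the paper extracts as Corollary \ref{CorTypes}): the truth of a bounded formula $Q_1x_1\in P\ldots Q_nx_n\in P\,\psi(\bar x,\bar a,\bar b)$ is indeed determined by the trace $\psi(A^{|\bar x|},\bar a,\bar b)\subseteq A^{|\bar x|}$, so an $\mathcal{L}_P$-type over $B$ is determined by the $\mathcal{L}$-type over $A\cup B$, i.e.\ by the pure type over $B$ together with the family of traces. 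You also correctly identify that the naive count of the trace data gives $2^\lambda$ and that this is where the work lies.

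The gap is precisely at that point: the claim that one can choose a single auxiliary tuple $\bar c\in A^{<\omega}$, ``via a coheir extension relative to a small base in $A\cap B$,'' whose $A_{ind}$-type over a set of size $\le\lambda$ encodes all the traces $\psi(A^{|\bar x|},\bar a,\bar b)$ simultaneously, is asserted but not argued, and as stated it is not plausible. No mechanism is given by which a tuple lying inside $A$, seen only through the structure $A_{ind}$, could register relations involving the external parameters $\bar a$ and $\bar b$ (which need not lie in $A$); a finite tuple cannot absorb $\lambda$-many independent pieces of data for the $\lambda$-many pairs $(\psi,\bar b)$ without a uniformity argument; and it is not even clear that the traces are definable sets of $A_{ind}$ over parameters from $A$ --- you call them ``definable subsets inside $A_{ind}$,'' but a priori they are just subsets of $A^{|\bar x|}$, and stability of $\mathrm{Th}(\mathcal M)$ only yields that $\mathrm{tp}_{\mathcal L}(\bar a\bar b/A)$ is definable over $\mathrm{acl}^{eq}(A\cup B)$, not over $A$ itself. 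Internalizing the trace data to $A_{ind}$ (so that $\lambda$-stability of $A_{ind}$ can be applied to a type over a set of size $\le\lambda$) is exactly the substance of the Casanovas--Ziegler proof, which combines definability of types in the stable theory with a careful bookkeeping of the defining parameters and of imaginaries; your sketch replaces this core step by a hope. In short: the frame (boundedness reduces to traces; count $\lambda\cdot\lambda$) is right, but the step that actually uses $\lambda$-stability of $A_{ind}$ is missing, so the proposal is not a proof.
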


Let $\equiv_n$ be the congruence modulo $n$ relation on the integers. Observe
that $a\not\equiv_n b$ is equivalent to $a\equiv_n b+1\lor a\equiv_n
b+2\lor\ldots \lor a\equiv_n b+(n-1)$, and hence we get the following
remark.

\begin{remark}\label{NormalDisjunctiveForm}
Let $\mathcal{L}_{mod}$ be the language of groups expanded with countably many
$2$-place predicates.
We recall that an $\mathcal{L}_{mod}$-formula $\phi(\bar{x})$ is equivalent, in
$(\mathbb{Z},+,0,\{\equiv_n\}_{n<\omega})$,
to a finite disjunction of formulas of the form:
\begin{center}
\begin{tabular}{lllll}
$t_1(\bar{x})=0$ & $\land $ & $\ldots$ & $\land$ & $t_k(\bar{x})=0$ \\
$r_1(\bar{x})\neq 0$ & $\land $ & $\ldots$ & $\land$ & $r_l(\bar{x})\neq 0$ \\
$s_1(\bar{x})\equiv_{n_1}0$ & $\land $ & $\ldots$ & $\land$ &
$s_m(\bar{x})\equiv_{n_m}0$ \\
\end{tabular}
\end{center}
where $t_i(\bar{x}), s_i(\bar{x}), r_i(\bar{x})$ are terms in the above
language.
\end{remark}



Set $\Pi_q$ to denote the set $\{q^n \ | \ 1\leq n<\omega\}$ for some natural number $q$.

\begin{lemma}\label{Consistency}
Let $q$ be a natural number. Let $\bar{b}$ be a tuple in $\Z$ and
$\phi(\bar{x},y,\bar z)$ be an $\mathcal{L}$-formula, where $\mathcal{L}$ is the
language of groups.
Suppose that the set $\Gamma(y):=\{\phi(\bar{b},y,\bar\alpha) \ | \ \bar\alpha\in \Pi_q^{|\bar z|}\}$ is
consistent with $\mathcal{T}h(\Z,+,0)$. Then there exists
$c\in\Z$ realizing the set $\Gamma(y)$.
\end{lemma}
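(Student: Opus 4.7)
\emph{Proof plan.} The plan is to combine Presburger quantifier elimination with the exponential sparsity of $\Pi_q$. Using Remark~\ref{NormalDisjunctiveForm}, I will write $\phi(\bar b, y, z)$ in disjunctive normal form $\bigvee_{i=1}^N D_i(\bar b, y, z)$, with each disjunct a conjunction of linear equations, inequations, and modular congruences. Let $M$ be the least common multiple of the moduli appearing in the $D_i$'s, and pick a witness $y^* \in \Gamma$ realizing $\Gamma(y)$ in some elementary extension $\Gamma \succeq (\Z,+,0)$. If $y^* \in \Z$ I set $c := y^*$, so I may assume $y^* \notin \Z$ and put $r := y^* \bmod M \in \{0, \ldots, M-1\}$.

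The core observation is that $r$ is a uniformly good residue: for every $\alpha \in \Pi_q$, the set $S_\alpha := \{y \in \Z : \phi(\bar b, y, \alpha)\}$ has the form $(R_\alpha + M\Z) \triangle G_\alpha$ for some $R_\alpha \subseteq \{0, \ldots, M-1\}$ and some finite $G_\alpha \subseteq \Z$. Indeed, inspecting the disjuncts shows that each $D_i(\bar b, \cdot, \alpha)$ is either a single integer (when an equation of $D_i$ has nonzero $y$-coefficient) or a union of residue classes modulo $M$ with finitely many inequation-exclusions. By elementarity the same description transports to $\Gamma$ with $G_\alpha$ unchanged, so since $y^* \notin \Z \supseteq G_\alpha$ we get $y^* \in R_\alpha + M\Gamma$, which forces $r \in R_\alpha$. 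Hence $r \in \bigcap_{\alpha \in \Pi_q} R_\alpha$, and the problem reduces to finding $c \in r + M\Z$ with $c \notin G_\alpha$ for every $\alpha \in \Pi_q$.

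Each element of $G_\alpha$ is an integer of the form $(c_0(\bar b) - b\alpha)/a$ for some linear triple $(a, b, c_0(\bar b))$ arising from $\phi$. If $b = 0$ the value is a single exclusion independent of $\alpha$, of which there are only finitely many; if $b \neq 0$, substituting $\alpha = q^n$ yields values of magnitude $\asymp q^n$, so each such triple contributes at most $O(\log N)$ exclusions inside $[-N, N]$. Since $|r + M\Z \cap [-N, N]| \sim 2N/M$, for $N$ large enough a simple counting argument produces an integer $c \in r + M\Z \cap [-N, N]$ avoiding every $G_\alpha$, and this $c$ realizes $\Gamma(y)$. The main obstacle is exactly this last counting step: although $\bigcup_\alpha G_\alpha$ is typically infinite, its intersection with any residue class modulo $M$ has density zero precisely because $\Pi_q$ grows exponentially, which is the feature that distinguishes $\Pi_q$ from generic infinite subsets of $\Z$.
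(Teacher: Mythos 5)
Your proposal is correct and follows essentially the same route as the paper: reduce via the disjunctive normal form of Remark~\ref{NormalDisjunctiveForm} to congruence conditions together with finitely many excluded points of the form $(c_0(\bar b)-b\alpha)/a$ for $\alpha\in\Pi_q$, and then use the exponential sparsity of $\Pi_q$ to see that these exclusions cannot exhaust a congruence class. Your repackaging of the paper's ``choose a consistent disjunctive clause for each $\alpha$'' step via a nonstandard witness with a uniform residue $r$ modulo $M$, and your $O(\log N)$ versus $2N/M$ count in place of the paper's gap argument between consecutive elements, are only cosmetic variants of the same idea.
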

\begin{proof}
We may assume that $\phi(\bar{x},y,\bar\alpha)$ is a formula as in Remark
\ref{NormalDisjunctiveForm}. If we fix some tuple $\bar \alpha_0$ in
$\Pi_q$, then each disjunctive clause in $\phi(\bar{b},y,\bar \alpha_0)$ asserts that
$y$ is equal to some element from a finite list of elements in $\Z$,
and $y$ is not equal to any element from a finite list of elements in $\Z$ and
$y$ belongs to the intersection of finitely many cosets of
fixed subgroups of $\Z$, where these fixed subgroups only depend on $\phi$ (not
$\bar{b}$ or $\bar\alpha_0$).

Our assumption that $\Gamma(y)$ is consistent implies that for each tuple $\bar\alpha_0$
in $\Pi_q$ we may choose a disjunctive
clause in $\phi(\bar{b},y,\bar\alpha_0)$ such that the set of these clauses is again
consistent. Note that if one of the chosen
clauses involves an equality, then the result holds trivially. So we will assume
that no equality is involved in any
disjunctive clause of $\phi$. On the other hand the intersection of cosets of
subgroups of a group is either empty or a coset
of the intersection of the subgroups, thus we may assume that a disjunctive
clause that involves congruence modulo relations,
it involves exactly one.

Next we prove that a finite union of sets of the form
$$\{k_0+k_1\cdot\alpha_1+\ldots+ k_s\cdot\alpha_s \ | \
\alpha_1,\ldots,\alpha_s\in \Pi_q\}$$
 cannot cover any coset of any (non-trivial) subgroup
of $\Z$. Suppose otherwise that the coset $m+n\Z$ is contained in a such finite union, and observe that we may assume, after subtracting $m$ if necessary, that $m=0$.
Thus, for each set of the above form we can write each given coefficient $k_i$ in base $q$ and obtain a natural number $l$ such that $n\Z$ is covered by finitely many sets of the form
$$
\left\{\lambda_0+\lambda_1\cdot\alpha_1+\ldots+\lambda_l\cdot\alpha_l \ | \ \alpha_1,\ldots,\alpha_l\in\Pi_q, \ 0\le |\lambda_0|,\ldots,|\lambda_l|<q \right\}.
$$
Assume $l$ is the biggest number obtained in the above mentioned fashion. Then, any multiple of $n$ can be written in base $q$ with at most $l+1$ many summands. Now, let $\mu$ be the element
$n\cdot(1+q+q^2+\ldots+q^{l+1})$, which clearly belongs to $n\Z$.
After writing $n$ in base $q$, we obtain that $\mu$ is written in base $q$ as the sum of at least $l+2$ many summands. Thus, by the uniqueness of the representation of $\mu$ in base $q$,
we obtain a contradiction.

Now, the consistency of $\Gamma(y)$ implies that $y$ belongs to the intersection
of finitely many cosets of subgroups of $\Z$ and $y$ is not equal
to any element of a finite union of sets of the form $$\{k_0+k_1\cdot\alpha_1+\ldots+ k_s\cdot\alpha_s \ | \
\alpha_1,\ldots,\alpha_s\in \Pi_q\}.$$
By the previous paragraph, a solution can be found in $\Z$ and this finishes the
proof.
\end{proof}


Now we are able to prove the following technical lemma.

\begin{lemma}\label{ReplaceSmallness}
Let $q$ be a natural number. Let $\mathcal{L}$ be the language of groups and
$P(x)$ be a unary predicate.
Let $\mathcal{Z}:=(\Z,+,0,\Pi_q)$ be an $\mathcal{L}_P$-structure.

Let $\phi(\bar{x},y,\bar z)$ be an $\mathcal{L}$-formula. Then there exists
$k<\omega$ such that:
$$
\mathcal{Z}\models\forall\bar{x}\big((\forall\bar z_0\in
P\ldots\forall\bar z_k\in P \, \exists y
\bigwedge_{j\le k}\phi(\bar{x},y,\bar z_j))
\rightarrow\exists y\forall \bar z\in P \phi(\bar{x},y,\bar z)\big).$$
\end{lemma}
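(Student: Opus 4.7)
The plan is to combine the normal form from Remark \ref{NormalDisjunctiveForm} with the combinatorial covering argument used in the proof of Lemma \ref{Consistency}. Writing $\phi(\bar x,y,\alpha)=\bigvee_{j=1}^N C_j(\bar x,y,\alpha)$ in disjunctive normal form, I let $D$ be the least common multiple of all moduli appearing in the congruences, and split the clauses into two kinds. Call $C_j$ of \emph{Type A} if some equation in $C_j$ has a nonzero coefficient of $y$; then $C_j(\bar b,\Z,\alpha)$ is either empty or a single point $y_j(\bar b,\alpha)$ that is an integer-linear function of $(\bar b,\alpha)$. Call $C_j$ of \emph{Type B} otherwise; then $C_j(\bar b,\Z,\alpha)$ is (if non-empty) a union of cosets of $D\Z$ minus a finite exception set $F_j(\bar b,\alpha)$ whose points depend linearly on $(\bar b,\alpha)$.

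Next, for each $\bar b$ and $\alpha\in\Pi_q$ define $\Sigma(\bar b,\alpha)\subseteq\Z/D\Z$ as the union of the residue classes covered by the Type B clauses of $\phi(\bar b,\Z,\alpha)$, and set $\Sigma_0(\bar b):=\bigcap_{\alpha\in\Pi_q}\Sigma(\bar b,\alpha)$. Since $\Z/D\Z$ has only $D$ elements, one can choose $A_0(\bar b)\subseteq\Pi_q$ of size at most $D$ with $\bigcap_{\alpha\in A_0(\bar b)}\Sigma(\bar b,\alpha)=\Sigma_0(\bar b)$. In the case $\Sigma_0(\bar b)\neq\emptyset$, take any $\sigma\in\Sigma_0(\bar b)$: the coset $\sigma+D\Z$ is contained in $\phi(\bar b,\Z,\alpha)$ up to a finite exception set for every $\alpha\in\Pi_q$, and the union of these exception sets over $\Pi_q$ is a finite union of sets of the form $\{k+\ell\alpha:\alpha\in\Pi_q\}$; by the covering claim inside the proof of Lemma \ref{Consistency}, no such union covers $\sigma+D\Z$, so a suitable $y\in\sigma+D\Z$ realizes $\phi(\bar b,y,\alpha)$ for every $\alpha\in\Pi_q$. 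Hence the conclusion holds here without using the hypothesis.

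The remaining case is $\Sigma_0(\bar b)=\emptyset$. Any $y$ satisfying $\bigwedge_{\alpha\in A_0(\bar b)}\phi(\bar b,y,\alpha)$ must, for some $\alpha_0\in A_0(\bar b)$, have its residue class outside $\Sigma(\bar b,\alpha_0)$, and therefore fails every Type B clause at $\alpha_0$ and satisfies some Type A clause there; consequently $y$ belongs to the finite candidate set $C(\bar b):=\{y_j(\bar b,\alpha):j\text{ Type A},\alpha\in A_0(\bar b)\}$, of size at most $ND$. Should the conclusion of the lemma fail for $\bar b$, each $c\in C(\bar b)$ admits a witness $\alpha_c\in\Pi_q$ with $\neg\phi(\bar b,c,\alpha_c)$; then $A_0(\bar b)\cup\{\alpha_c:c\in C(\bar b)\}$ is an inconsistent subset of $\{\phi(\bar b,y,\alpha):\alpha\in\Pi_q\}$ of size at most $D(1+N)$. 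Taking $k:=D(1+N)-1$ --- a constant depending only on $\phi$ --- the $(k+1)$-wise consistency hypothesis rules out this inconsistent subset and forces the conclusion. The only delicate point is the bookkeeping of finite exception sets in Case~1, but the covering claim from Lemma \ref{Consistency} is tailor-made for exactly this situation, so I expect no real obstruction there.
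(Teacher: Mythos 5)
Your argument is correct, but it takes a genuinely different route from the paper. The paper's proof is a two-line reduction: since $(\Z,+,0)$ has the non-finite-cover-property, each $\phi$ comes with some $k$ such that $k$-consistency of a set of instances implies consistency, and then Lemma \ref{Consistency} (used as a black box) turns consistency into an actual solution in $\Z$. You never invoke nfcp: instead you extract an explicit bound $k=D(N+1)-1$ by hand from the disjunctive normal form of Remark \ref{NormalDisjunctiveForm}, splitting clauses according to whether $y$ occurs in an equation, doing the residue bookkeeping modulo $D$, and reusing only the internal covering claim of Lemma \ref{Consistency} (that a finite union of sets $\{k+\ell\alpha:\alpha\in\Pi_q\}$ cannot cover a coset). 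What your version buys is a self-contained and effective proof -- in effect you reprove the relevant instance of nfcp for these formulas, so the lemma no longer rests on quoting that property of $\Th$... of the pure group; what the paper's version buys is brevity and the clean reuse of Lemma \ref{Consistency} as stated. Two small points to tidy up: the Type A candidate $y_j(\bar b,\alpha)$ and the excluded points of the inequations are not integer-linear in $(\bar b,\alpha)$ but of the form $(a+c\alpha)/m$ for a fixed nonzero integer $m$ (at most one integer point each), so in Case 1 the exception sets are integer points of such rational-affine images of $\Pi_q$; the covering claim still applies verbatim since consecutive gaps in these sets eventually exceed any fixed bound, exactly as in Remark \ref{Sparse}, but this should be said. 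Also, like the paper's own proof, you silently read the displayed hypothesis as the conjunction $\bigwedge_{i\le k}\phi(\bar x,y,\alpha_i)$ (the ``$\phi(\bar x,y,\alpha_0)\land\phi(\bar x,y,\alpha_1)$'' in the statement is evidently a typo), which is the intended $(k+1)$-wise solvability in $\Z$; with that reading your padding argument for families of size less than $k+1$ goes through.
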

\begin{proof}
Since $(\Z,+,0)$ has nfcp we can assign to each formula $\phi$ a natural number
$k$ such that
any set of instances of the formula $\phi$ is consistent if and only if it is
$k$-consistent. By
Lemma \ref{Consistency} if a set
$\{\phi(\bar{b},y,\bar\alpha) \ | \ \bar\alpha\in
\Pi_q^{|\bar z|}\}$ is consistent, then
a solution can be found in $\Z$ and this is enough to conclude. \end{proof}

The following proposition is an easy corollary of Lemma \ref{ReplaceSmallness}
and the proof is left to the reader, see \cite[Proposition 2.1]{CasaZie}.

\begin{proposition}\label{Bounded}
Let $q$ be a natural number. Let $\mathcal{L}$ be the language of groups and
$P(x)$ be a unary predicate. Let $\mathcal{Z}:=(\Z,+,0,\Pi_q)$ be an
$\mathcal{L}_P$-structure.
Then every $\mathcal{L}_P$-formula in $\mathcal{Z}$ is bounded.
\end{proposition}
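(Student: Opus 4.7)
The plan is to prove by induction on the structure of an $\mathcal{L}_P$-formula $\phi$ that $\phi$ is equivalent in $\mathcal{Z}$ to a bounded formula. The atomic cases are straightforward (an $\mathcal{L}$-atom is trivially bounded, while an instance $P(t(\bar{x}))$ is equivalent to the bounded formula $\exists \alpha \in P\,(\alpha = t(\bar{x}))$), Boolean combinations preserve boundedness, and $P$-restricted quantifications preserve boundedness by the very definition of a bounded formula. The only non-trivial inductive case is the unrestricted quantifier $\exists y\,\phi'(\bar{x},y)$.

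By the outer inductive hypothesis we may replace $\phi'$ by an equivalent bounded formula $\theta(\bar{x},y)$, which after routine manipulations we may further put in prenex form $Q_1\alpha_1\in P\cdots Q_n\alpha_n\in P\,\psi(\bar{x},y,\bar{\alpha})$ with $\psi$ quantifier-free and $\mathcal{L}$. We then argue by a secondary induction on $n$ that $\exists y\,\theta$ is equivalent to a bounded formula. For $n = 0$ this is immediate since $\exists y\,\psi$ is itself in $\mathcal{L}$. For the induction step, inspect the outermost quantifier $Q_1$. If $Q_1 = \exists$, the commutation $\exists y\,\exists\alpha_1\in P \equiv \exists\alpha_1\in P\,\exists y$ reduces the problem to $\exists y$ applied to a bounded formula with $n - 1$ $P$-quantifiers, which is handled by the inner inductive hypothesis. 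If $Q_1 = \forall$, we invoke the tuple generalization of Lemma~\ref{ReplaceSmallness} (its proof via Lemma~\ref{Consistency} and the observation of Remark~\ref{Sparse} extends verbatim to tuples $\bar{\alpha}\in P^m$), which rewrites $\exists y\,\forall\alpha_1\in P\,\chi(\bar{x},y,\alpha_1)$ as $\forall\alpha_1^{(1)}\ldots\alpha_1^{(k)}\in P\,\exists y\bigwedge_{i=1}^{k}\chi(\bar{x},y,\alpha_1^{(i)})$, provided $\chi$ is an $\mathcal{L}$-formula.

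The main obstacle lies in this last clause: the matrix $\chi = Q_2\alpha_2\in P\cdots Q_n\alpha_n\in P\,\psi$ beneath $\forall\alpha_1\in P$ is bounded but generally not in $\mathcal{L}$, so Lemma~\ref{ReplaceSmallness} does not apply verbatim. The standard Casanovas--Ziegler strategy \cite[Proposition 2.1]{CasaZie} to overcome this is to rearrange the prefix of $\theta$ into maximal alternating blocks of like $P$-quantifiers and push $\exists y$ inward one block at a time, commuting trivially through $\exists$-blocks and, when about to cross a $\forall$-block whose matrix is $\mathcal{L}$, invoking the lemma. Each such invocation produces a finite conjunction of $\mathcal{L}$-formulas as the new matrix, which can then be prenex-distributed through the remaining inner quantifier blocks (sharing $\forall$-variables across conjuncts and indexing $\exists$-variables per conjunct), strictly decreasing the inner inductive measure. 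The process terminates when $\exists y$ is applied to an $\mathcal{L}$-formula, yielding the desired bounded equivalent of $\exists y\,\theta$.
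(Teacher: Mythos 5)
Your outer skeleton (induction on formulas, atomic/Boolean/$P$-restricted cases, reduction to $\exists y$ over a bounded formula, commuting $\exists y$ past existential $P$-quantifiers) is fine, but the step you yourself flag as the main obstacle is not actually resolved, and it is precisely the nontrivial content of the statement. Consider the first hard configuration, $\exists y\,\forall\alpha\in P\,\exists\beta\in P\,\psi(\bar x,y,\alpha,\beta)$ with $\psi\in\mathcal L$. Your recipe says to push $\exists y$ inward ``one block at a time'' and to invoke Lemma \ref{ReplaceSmallness} ``when about to cross a $\forall$-block whose matrix is $\mathcal L$''; but the matrix under the outermost $\forall$-block is $\mathcal L$ only when that block is innermost, so the procedure gives no instruction here, and the follow-up about prenex-distributing the resulting finite conjunction through the ``remaining inner blocks'' presupposes that the exchange has already been performed --- i.e.\ it assumes exactly what has to be proved. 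Nor can Lemma \ref{ReplaceSmallness} (even in a tuple version) be applied with matrix $\exists\beta\in P\,\psi$: the instances $\exists\beta\in P\,\psi(\bar b,y,\alpha_0,\beta)$ define $P$-definable sets such as $y\in\alpha_0+\Pi_q$, to which the combinatorial core of Lemma \ref{Consistency} (cosets versus finite unions of sets $\{k+l\alpha:\alpha\in\Pi_q\}$) simply does not speak. Handling such alternations is the whole point of \cite[Proposition 2.1]{CasaZie}: the paper's proof consists of verifying the hypothesis of that proposition via Lemma \ref{ReplaceSmallness} and then quoting it, and your sketch would need to reproduce Casanovas--Ziegler's argument rather than gesture at it.

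A secondary overreach: the claim that the proofs of Lemmas \ref{Consistency} and \ref{ReplaceSmallness} extend ``verbatim'' to tuples $\bar\alpha\in P^m$ is not accurate. The single-variable argument rests on the gaps between consecutive elements of $\{k+l\alpha:\alpha\in\Pi_q\}$ being eventually increasing (Remark \ref{Sparse}); for $\{k+l_1\alpha_1+\cdots+l_m\alpha_m:\bar\alpha\in\Pi_q^m\}$ this fails (already differences of powers of $2$ have non-monotone gaps), so a different argument is needed --- e.g.\ that such a set has only $O((\log N)^m)$ elements in $[-N,N]$ and hence finitely many of them cannot cover a coset of $n\Z$. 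This is fixable, but it is an additional argument, not a verbatim extension; and if one follows the paper's route through \cite[Proposition 2.1]{CasaZie}, the single-variable statement of Lemma \ref{ReplaceSmallness} is the form of the hypothesis one actually needs to check.
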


As a consequence we deduce:

\begin{corollary}\label{CorTypes}
Let $q$ be a natural number. Let $\mathcal{L}$ be the language of groups and
$P(x)$ be a unary predicate, and let  $(\Gamma,+',0,\Pi_q')\equiv(\Z,+,0,\Pi_q)$
be
$\mathcal{L}_P$-structures.
Two tuples of $\Gamma$ realize the same $\mathcal L_P$-formulas over any set of
parameters $C\subseteq \Gamma$
whenever they realize the same $\mathcal L$-formulas over $\Pi_q'\cup C$.
\end{corollary}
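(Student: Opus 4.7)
The plan is to combine Proposition \ref{Bounded} with a straightforward induction on the number of $P$-bounded quantifiers. By Proposition \ref{Bounded}, every $\mathcal L_P$-formula is equivalent in $\mathcal Z$ to a bounded $\mathcal L_P$-formula, and since this equivalence is expressible as a first-order sentence, it transfers to $(\Gamma,+',0,\Pi_q')$ by elementary equivalence. So it is enough to prove the statement for bounded $\mathcal L_P$-formulas.

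Assume therefore that $\bar a,\bar b\in\Gamma$ realize the same $\mathcal L$-formulas over $\Pi_q'\cup C$. I would prove by induction on the number of $P$-bounded quantifiers in $\phi(\bar x,\bar y)$ that $\Gamma\models\phi(\bar a,\bar c)\leftrightarrow\phi(\bar b,\bar c)$ for every tuple $\bar c$ from $C$. The base case is when $\phi$ is an $\mathcal L$-formula, and it is immediate from the hypothesis since $C\subseteq\Pi_q'\cup C$. For the inductive step, write $\phi(\bar x,\bar y)$ in the form $Qz\in P\,\phi'(z,\bar x,\bar y)$, where $\phi'$ has one fewer $P$-bounded quantifier. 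If $Q=\exists$ and $\alpha\in\Pi_q'$ witnesses $\phi'(\alpha,\bar a,\bar c)$, then, treating $\alpha$ as an extra parameter and noting that $\bar a,\bar b$ still have the same $\mathcal L$-type over $\Pi_q'\cup(C\cup\{\alpha\})=\Pi_q'\cup C$, the induction hypothesis applied to $\phi'(\alpha,\bar x,\bar c)$ yields $\phi'(\alpha,\bar b,\bar c)$, and hence $\Gamma\models\exists z\in P\,\phi'(z,\bar b,\bar c)$. The universal case is dual.

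The only delicate point, and the reason the hypothesis is taken over $\Pi_q'\cup C$ rather than just $C$, is that each time a $P$-quantifier is stripped off, the witness $\alpha$ must remain a legal parameter for the induction hypothesis; this is automatic because $\alpha\in\Pi_q'$. Aside from this bookkeeping, no further ingredient is needed, so I do not anticipate a genuine obstacle: the statement is essentially an unwinding of the boundedness conclusion of Proposition \ref{Bounded}.
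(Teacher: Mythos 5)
Your proposal is correct and follows essentially the same route as the paper: an induction on the number of $P$-bounded quantifiers, absorbing each witness $\alpha\in\Pi_q'$ into the parameter set $\Pi_q'\cup C$, followed by an appeal to Proposition \ref{Bounded} (transferred to $\Gamma$ by elementary equivalence) to reduce arbitrary $\mathcal L_P$-formulas to bounded ones. The paper merely leaves the induction as ``easy to see''; you have simply spelled it out.
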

\begin{proof}
Let $a$ and $b$ be two tuples realizing the same $\mathcal L$-formulas over
$\Pi_q',C$. It is easy to see by induction on the number
of quantifiers that $a$ and $b$ realize the same formulas of the form
$$Q_1x_1\in P\ldots Q_nx_n\in P \, \psi(\bar{x},\bar{y}),$$
where the $Q_i$'s are quantifiers and $\psi(\bar{x},\bar{y})$ is an
$\mathcal{L}(\Pi_q'\cup C)$-formula. Hence, we conclude by Proposition
\ref{Bounded}.
\end{proof}

Our last task is to prove that the induced structure on the subset of the
integers that consists of powers of some natural number, coming from $(\Z,+,0)$,
is tame.
Recall that if $B$ is a subset of the domain $M$, of a first order structure
$\mathcal{M}$,
then by the {\em induced structure on $B$} we mean the structure with domain $B$
and predicates for every subset of $B^n$ of the
form $B^n\cap \phi(M^n)$, where $\phi(x)$ is a first-order formula (over the
empty set). We denote this structure by $B^{\rm ind}$.

\begin{proposition}\label{PropositionInduced}
Let $q$ be a natural number. The structure $\Pi_q^{\rm ind}$ (with respect to
$(\Z,+,0)$) is superstable and has Lascar rank one.
\end{proposition}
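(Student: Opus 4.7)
My plan is to use QE for $(\Z,+,0)$ in the language with congruence predicates $\equiv_m$ (Remark \ref{NormalDisjunctiveForm}) to reduce the analysis of $\Pi_q^{\rm ind}$ to its atomic relations on $\Pi_q^n$, and then to count $1$-types directly in order to establish superstability and Lascar rank $1$.

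By QE, every $\mathcal{L}$-formula is a Boolean combination of affine equations $\sum a_i x_i = c$ and congruences $\sum a_i x_i \equiv_m c$. For a congruence, the value $\sum a_i q^{k_i}\bmod m$ depends only on the individual residues $q^{k_i}\bmod m$, so the relation on $\Pi_q^n$ decomposes as
\[
\bigvee_{\substack{\bar{s}\in(\Z/m\Z)^n\\ \sum a_i s_i\equiv_m c}}\;\bigwedge_i (x_i\equiv_m s_i),
\]
a Boolean combination of the unary predicates $U_{m,s}(x)\equiv(x\equiv_m s)$. For an affine equation, the exponential growth of $\Pi_q$, which is the combinatorial input underlying Lemma \ref{Consistency}, forces every solution either to lie in a bounded region (so that the solution set is finite) or to satisfy shift equalities $k_i=k_j+\ell$ among the exponents; these latter are captured by equality and the $\emptyset$-definable successor $S\colon x\mapsto qx$.

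Passing to a saturated elementary extension $\Gamma$ and counting $1$-types over a parameter set $A\subseteq\Pi_q'$, any such type is determined by (a)~the residue data $(x\bmod m)_{m\geq 1}$, which can take at most $2^{\aleph_0}$ distinct values, together with (b)~either an equation forcing $x$ into a finite algebraic set over $A$ (algebraic case) or the negation of every such equation (non-algebraic case). This gives $|S_1(A)|\leq|A|+2^{\aleph_0}$, so $\Pi_q^{\rm ind}$ is $\lambda$-stable for every $\lambda\geq 2^{\aleph_0}$ and hence superstable. Moreover, a non-algebraic $1$-type over $A$ admits a unique non-forking extension to any $B\supseteq A$, namely the one specifying the same residue data, since any further constraint must be an equation that makes $x$ algebraic over $B$. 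Hence the Lascar rank is exactly $1$. The main obstacle will be the affine-equation analysis above: carefully showing that every infinite family of solutions arises from shift equalities in the exponents (plus finitely many exceptional solutions), and hence contributes no information beyond equality and the successor $S$.
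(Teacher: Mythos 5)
Your analysis of the atomic relations of $\Pi_q^{\rm ind}$ (congruences reduce to unary residue predicates on $\Pi_q$, linear equations reduce, via growth, to exponent-shift conditions) is sound and runs parallel to the paper's Lemmas \ref{LemmaArith}, \ref{Equations} and \ref{LemmaInd1}, which recast $\Pi_q^{\rm ind}$ inside $(\N,s,\{Q_{k,n}\}_{n<\omega,k<n})$ on the exponents. The genuine gap is the step where you pass to a saturated model and assert that every $1$-type over $A$ is determined by residue data together with (non-)algebraicity. A complete type in $\Pi_q^{\rm ind}$ involves formulas whose quantifiers range over the predicate itself, and knowing the quantifier-free (atomic) relations does not by itself tell you that such quantification defines nothing new. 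Concretely, if the graph of multiplication restricted to $\Pi_q$ (i.e.\ addition of exponents) were definable from your atomic relations using quantifiers over $\Pi_q$, the induced structure would define a Presburger-like ordered structure on exponents and be unstable, and the count $|S_1(A)|\le |A|+2^{\aleph_0}$ by residue data would fail. Ruling this out is precisely a quantifier-elimination statement; in the paper this is Lemma \ref{LemmaInd2} (QE for $(\N,s,\{Q_{k,n}\})$ after adding a constant and the predecessor), and both the type count and the claim that only algebraic formulas divide (hence Lascar rank one) are derived from that QE, not from the atomic analysis alone. The same omission affects your final sentence on non-forking extensions: ``any further constraint must be an equation'' is a claim about all formulas, not just atomic ones, and needs the same input.

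Apart from this, your route is essentially the paper's, so the fix is to supply the missing analogue of Lemma \ref{LemmaInd2}: either prove QE for the exponent structure $(\N,s,\{Q_{k,n}\})$ (equivalently, give a back-and-forth argument showing that, in a saturated model, elements with the same residue data and the same shift/equality relations to the parameters are conjugate), after which your counting and rank computation go through verbatim. One smaller caveat: your description of equation solution sets as ``shift equalities plus finitely many exceptional solutions'' needs refining, since for example with $q=2$ the equation $x_1+x_2-x_3=0$ has the infinite family $(2^a,2^a,2^{a+1})$; the correct statement partitions the variables into groups with fixed exponent offsets within each group (and vanishing weighted coefficient sums), as in the paper's Lemma \ref{Equations} and its use in Lemma \ref{LemmaInd1}. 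You flagged this as your main obstacle, and it is manageable, but it is the QE step above, which you did not flag, that carries the real weight of the proposition.
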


The proof is split in a series of lemmata. We first prove some results, we
believe well known, in the spirit of Diophantine analysis.

\begin{lemma}\label{LemmaArith}
Let $q$ be some natural number. Let $k<n$ be natural numbers such that $n$ is
co-prime with $q$,
and let $[k]_n$ denote the congruence class of $k$ modulo $n$.
Then $\Pi_q\cap [k]_n=\{q^{m_0+\varphi(n)\cdot m}: m<\omega\}$, where
$\varphi(n)$ is the Euler's phi function and
$m_0$ is the smallest natural number for which $q^{m_0}\equiv k\mod n$.
\end{lemma}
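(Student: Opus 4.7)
The plan is a short elementary number-theoretic argument leveraging Euler's theorem and the cyclic structure of the subgroup generated by $q$ inside $(\Z/n\Z)^\times$. I would split the proof into the two containments, starting with the easier one.

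For the containment $\{q^{m_0+\varphi(n)\cdot m}: m<\omega\}\subseteq \Pi_q\cap [k]_n$: since $\gcd(n,q)=1$, Euler's theorem yields $q^{\varphi(n)}\equiv 1\pmod{n}$. Hence for every $m<\omega$,
\[
q^{m_0+\varphi(n)\cdot m} \;=\; q^{m_0}\cdot (q^{\varphi(n)})^m \;\equiv\; q^{m_0} \;\equiv\; k\pmod{n},
\]
and being a power of $q$, the left-hand side clearly lies in $\Pi_q\cap[k]_n$.

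For the reverse containment, I would take an arbitrary $q^j\in \Pi_q\cap [k]_n$, so that $q^j\equiv q^{m_0}\pmod{n}$. As $q$ is a unit modulo $n$, this gives $q^{|j-m_0|}\equiv 1\pmod{n}$, and minimality of $m_0$ rules out $j<m_0$. Performing Euclidean division of exponents, write $j-m_0 = \varphi(n)\cdot s + r$ with $0\leq r<\varphi(n)$; Euler's theorem once more yields $q^r\equiv 1\pmod{n}$, so $q^{m_0+r}\equiv k\pmod{n}$, and the minimality of $m_0$ as the smallest exponent whose $q$-power lies in $[k]_n$ forces $r=0$. This gives $j=m_0+\varphi(n)\cdot s$, as required.

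No obstacle of any depth appears: everything reduces to Euler's theorem together with Euclidean division on exponents. The only worth-flagging point is the convention when $\Pi_q\cap [k]_n$ is empty, in which case the statement is to be read as vacuous since no such $m_0$ exists.
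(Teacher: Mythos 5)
Your first containment is fine, and in fact cleaner than the paper's (the paper builds the same powers via a recursively defined integer sequence $\lambda_m$, which is Euler's theorem in disguise). The gap is in the reverse containment, at the last step. From $q^{j-m_0}\equiv 1\pmod n$ and $j-m_0=\varphi(n)\cdot s+r$ with $0\le r<\varphi(n)$ you correctly get $q^{r}\equiv 1\pmod n$, hence $q^{m_0+r}\equiv k\pmod n$; but minimality of $m_0$ does \emph{not} force $r=0$, since $m_0+r\ge m_0$ and minimality only excludes exponents strictly below $m_0$. What you actually need is that $q^{r}\equiv 1\pmod n$ with $0\le r<\varphi(n)$ implies $r=0$, which holds only when the multiplicative order of $q$ modulo $n$ is exactly $\varphi(n)$, i.e.\ when $q$ is a primitive root modulo $n$. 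Concretely, take $q=4$, $n=5$, $k=4$: then $m_0=1$, $\varphi(5)=4$, and $4^{3}=64\equiv 4\pmod 5$, so $4^{3}\in\Pi_q\cap[k]_n$ although $3\notin\{1+4m:\ m<\omega\}$; here $r=2$ and $4^{2}\equiv 1\pmod 5$, so your forced conclusion $r=0$ fails — and with it the lemma as literally stated.

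For what it is worth, the paper's own proof stumbles at exactly the same spot: it asserts that $q^{s}\in[1]_n$ with $s<\varphi(n)$ only when $s=0$ ``as $\varphi(n)$ is the order of the multiplicative group $(\Z/n\Z)^\times$'', which conflates the order of the group with the order of the element $q$ in it. Both arguments, yours and the paper's, become correct with the same structure once $\varphi(n)$ is replaced throughout by the multiplicative order $d$ of $q$ modulo $n$: then the remainder $r<d$ with $q^{r}\equiv 1\pmod n$ vanishes by definition of the order, giving $\Pi_q\cap[k]_n=\{q^{m_0+d\cdot m}:\ m<\omega\}$. This repaired statement still exhibits $\Pi_q\cap[k]_n$ as an arithmetic progression in the exponent, which is all that the later application (Lemma \ref{LemmaInd1}, interpreting $\Pi_q^{\rm ind}$ in $\mathcal{N}$ via the predicates $Q_{k,n}$) requires.
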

\begin{proof}
We first note that if $k,n$ are not co-prime then the intersection of $[k]_n$
with $\Pi_q$ is empty. The common factor
of $k$ and $n$ does not contain a factor of $q$ since $n$ is co-prime with $q$,
and it should appear as factor in any element of $k+n\cdot\Z$.

We now assume that $k,n$ are co-prime and we fix $k,n,m_0$ satisfying the
hypothesis of the lemma. We define $\lambda_m$ recursively as follows:
$$
\begin{array}{l}
\lambda_0:=\frac{q^{m_0}-k}{n} \\
\lambda_{m+1}:=\lambda_m\cdot b^{\varphi(n)}+k\cdot\frac{q^{\varphi(n)}-1}{n}, \
\textrm{for} \  0\leq m<\omega.
\end{array}
$$
Note that, by Euler's theorem, all the $\lambda_m$'s are integers.
Furthermore, one can easily see, by induction on $m$,  that $\lambda_m\cdot n+k$
is a power of $q$ of the form
$q^{m_0+\varphi(n)\cdot m}$ and therefore $\{\lambda_m\cdot n+k \ | \
m<\omega\}\subseteq \Pi_q\cap [k]_n$.

In fact, the other inclusion also holds. To see this, let $q^l$ be an arbitrary
power of $q$.
We may assume that $l>m_0$, since $m_0$ is the smallest natural number
satisfying the hypothesis.
Then we can find some $m$ such that $l=m_0+\varphi(n)\cdot m+s$ with
$s<\varphi(n)$.
As $\varphi(n)$ is the order of the multiplicative group $(\Z/n\Z)^\times$, we
get $q^s\in [1]_n$ only when $s=0$. Since $k,n$ are co-prime
$k$ has a multiplicative inverse modulo $n$. Therefore
$$q^l=q^{m_0+\varphi(n)\cdot m}\cdot q^s\equiv_n k\cdot q^s\equiv_n k
\ \mbox{ if and only if } \ s=0,$$ and this
concludes the proof.
\end{proof}

\begin{remark}\label{Power2}
Let $q$ be some natural number. Assume $n$ is a power of a prime which is not
co-prime with $q$, then
the intersection of $\Pi_q$ with $[k]_n$ is either finite or co-finite in
$\Pi_q$.
\end{remark}


\begin{lemma}\label{Equations}
Let $k_1x_1+\ldots+k_nx_n=k$ be an equation over the integers and $S\subseteq
\Z^n$ be its solution set. Then $S\cap \Pi_q^n$ is either empty or a finite union
of sets of the form: \\ \\
\begin{tabular}{cl}  $\{ (q^{\lambda_1},\ldots, q^{\lambda_n}) \ |$ & $\lambda_{i_1}>m_1, \ldots, \lambda_{i_k}>m_k,$ \\ & $\lambda_{i_{k+1}}=\alpha_{k+1}\lambda_{i_{j_1}}+ m_{k+1},$
\\ & $\vdots$ \\ & $\lambda_{i_n}=\alpha_n\lambda_{i_{j_{n-k+1}}}+m_n\},$ \end{tabular} \\ \\
 where $m_1,\ldots, m_n\in \Z$, $\alpha_i\in\{0,1\},$  and $\{i_{j_1},\ldots, i_{j_{n-k+1}}\}\subseteq \{i_1,\ldots,i_k\}$.
\end{lemma}
\begin{proof}
The proof is by induction. For the base case $n=1$, we easily see that $k_1x_1=k$ can either be empty or have a single solution,
thus the solution set is of the required form. Suppose that for every $m<n$ the solution set of any linear equation in $m$ variables
have the required form, we show that the same holds for equations with $n$ variables.

We split the solution set in finitely many subsets according to the finitely many orderings we can put on the $n$ variables.
For example to the ordering $x_1\leq x_2\leq\ldots\leq x_n$ corresponds the subset of solutions for which each co-ordinate
takes bigger or equal value to its previous one. We analyse those subsets in parallel. For notational purposes
we analyse the set with the above ordering. Let $\{(q^{\lambda_1(i)},\ldots, q^{\lambda_n(i)}) \ | \ i<\omega\}$ be an
enumeration of this set. Then
$$q^{\lambda_1(i)}(k_1 + k_2q^{\lambda_2(i)-\lambda_1(i)}+\ldots+k_nq^{\lambda_n(i)-\lambda_1(i)})=k$$
We take cases:\\
{\bf Case 1)} Suppose the sequence $\lambda_1(i)$ is bounded. Then for each of the finitely many values of $\lambda_1(i)$
we have $k_2q^{\lambda_2(i)-\lambda_1(i)}+\ldots+k_nq^{\lambda_n(i)-\lambda_1(i)} = \frac{k}{q^\lambda_1(i)} - k_1$. Using the
inductive hypothesis for the linear equation $k_2x_2+\ldots +k_nx_n=\frac{k}{q^\lambda_1(i)} - k_1$, we see that
the solution set is contained in a set of the required form. \\
{\bf Case 2)} Suppose the sequence $\lambda_1(i)$ is unbounded. Then $k$ must be $0$ and
$k_1 + k_2q^{\lambda_2(i)-\lambda_1(i)}+\ldots+k_nq^{\lambda_n(i)-\lambda_1(i)}=0$. Thus, we have:
$$q^{\lambda_2(i)-\lambda_1(i)}(k_2+\ldots+k_nq^{\lambda_n(i)-\lambda_2(i)}) = -k_1$$
Note that in this case, since $k_1\neq 0$ we must have that $\lambda_2(i)-\lambda_1(i)$ is bounded.
For each of the finitely many values $\lambda_2(i)-\lambda_1(i)$ takes, we continue our analysis in parallel. We have:
$$k_3q^{\lambda_3(i)-\lambda_2(i)}+\ldots+k_nq^{\lambda_n(i)-\lambda_2(i)} = \frac{-k_1}{q^{\lambda_2(i)-\lambda_1(i)}}-k_2$$
At this step and every step after we take cases according to whether $\lambda_{j+1}(i)-\lambda_j(i)$ is bounded or not.
In the case where it is bounded, for each value of the finitely many, a relation of the form $\lambda_{j+1}=\lambda_j+m_j$ is introduced. In the case it is
unbounded we use the induction hypothesis as our solution set is contained in the solution set of linear equations of the form
$k_1x_1+\ldots+k_mx_m=0$ and $k_{m+1}x_{m+1}+\ldots +k_nx_n=0$.
\end{proof}

\begin{lemma}\label{LemmaInd1}
Let $q$ be some natural number. Let
$\mathcal{N}:=(\N,s,\{Q_{k,n}\}_{n<\omega,k<n})$ be a first order structure
where the function symbol $s$ is interpreted as
the successor function and the predicate $Q_{k,n}$ is interpreted as
the set of natural numbers which are residual to $k$ modulo $n$. Then
$\Pi_q^{\rm ind}$ is definably interpreted in $\mathcal{N}$.
\end{lemma}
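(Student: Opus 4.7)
The plan is to exhibit the interpretation directly via the natural bijection $\iota : \Pi_q \to \N_{\geq 1}$ sending $q^n \mapsto n$. Since $\iota$ is a bijection onto the $\emptyset$-definable subset $\N_{\geq 1}$ of $\mathcal{N}$, no auxiliary equivalence relation is needed, and it suffices to verify that every $\emptyset$-definable relation $R \subseteq \Pi_q^n$ coming from $(\Z,+,0)$ corresponds under $\iota$ to a $\emptyset$-definable relation in $\mathcal{N}$.

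First, I would invoke Remark \ref{NormalDisjunctiveForm} (noting that each congruence predicate $\equiv_m$ is $\emptyset$-definable in $(\Z,+,0)$) to reduce to the case of atomic formulas of the form $\sum_i k_i x_i = 0$ and $\sum_i k_i x_i \equiv_m 0$, together with their negations. Since interpretability commutes with boolean combinations, only these atomic relations need to be translated into the language of $\mathcal{N}$.

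For an equation $\sum_{i=1}^n k_i x_i = 0$, I would apply Lemma \ref{Equations} to write its intersection with $\Pi_q^n$ as a finite union of sets $\langle \bar a\rangle^+$. After reading $\bar a = (q^{k_1}, \ldots, q^{k_n})$, such a set corresponds under $\iota$ to the shifted diagonal $\{(k_1 + m, \ldots, k_n + m) : m \geq 0\}$ in $\N^n$, which is defined in $\mathcal{N}$ by choosing $i_0$ with $k_{i_0} = \min_i k_i$ and writing $\bigwedge_i\bigl(x_i = s^{k_i - k_{i_0}}(x_{i_0})\bigr)$ together with the lower bound $x_{i_0} \geq k_{i_0}$; both the iterated successor and the fixed lower bound are expressible using $s$ and the constants $s^j(0)$.

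For a congruence $\sum_i k_i x_i \equiv_m 0$, the condition $\sum_i k_i q^{a_i} \equiv_m 0$ depends only on the residues $q^{a_i} \bmod m$. Using Lemma \ref{LemmaArith} (when $\gcd(q,m)=1$), Remark \ref{Power2} (for a prime-power factor of $m$ sharing a factor with $q$), and the Chinese Remainder Theorem, the map $a \mapsto q^a \bmod m$ becomes, beyond some threshold $N$ in $a$, periodic of period $\varphi(m')$ in $a$, where $m'$ is the part of $m$ coprime to $q$. Hence, outside finitely many exceptional tuples where some $a_i \leq N$, the set of $(a_1,\ldots,a_n)$ satisfying the congruence is a union of cosets of a finite-index subgroup of $\N^n$, definable in $\mathcal{N}$ using the unary predicates $Q_{k,d}$, while the exceptional tuples are handled by formulas of the form $x_i = s^j(0)$. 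The main obstacle will be precisely the bookkeeping in this mixed congruence case, where one must carefully separate the eventually-zero contribution from primes shared with $q$ from the periodic contribution coming from the coprime part of $m$; beyond this, the argument is a routine translation dictated by Lemmas \ref{Equations} and \ref{LemmaArith} together with Remark \ref{Power2}.
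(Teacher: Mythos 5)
Your proposal follows the paper's proof essentially verbatim: the same identification of $\Pi_q$ with the domain of $\mathcal{N}$ via exponents, the same reduction through Remark \ref{NormalDisjunctiveForm} to equations and congruences, with equations handled via Lemma \ref{Equations} and rendered as successor-shifted diagonals with finitely many exceptions (exactly the formulas the paper writes), and congruences handled via Lemma \ref{LemmaArith}, Remark \ref{Power2} and the Chinese remainder theorem, yielding conjunctions of the predicates $Q_{k,m}$ together with finitely many exceptional values. So the approach and its execution coincide with the paper's.
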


\begin{proof}
Throughout the proof the symbol $s^m$ will be used to denote $s\circ
s\circ\ldots\circ s$ $m$-times.
We also allow $m$ to be negative, in which case $s^m$ denotes the composition of
the predecessor function $m$-times (which is clearly definable).

We first interpret $\Pi_q$ to be the domain of $\mathcal{N}$.
Now let $P$ be a predicate of $\Pi_q^{\rm ind}$. By the construction of
$\Pi_q^{\rm ind}$ we have that $P$ is a subset of the form $\phi(\Z^n)\cap
\Pi_q^n$ for some
quantifier free formula $\phi$ in $(\Z,+,0, \{\equiv_n\}_{n<\omega})$. Since a
quantifier free formula is a boolean
combination of formulas of the form $t(\bar{x})=0$ and $s(\bar{x})\equiv_l0$, we
only need to
interpret in $\mathcal{N}$ solution sets of equations and congruence relations
of the above simple form intersected with $\Pi_q^n$.

Suppose $\phi(\bar{x})$ is the equation $t(\bar{x})=0$. Then, by Lemma
\ref{Equations}, the set $\phi(\Z^n)\cap \Pi_q^n$ can be interpreted as a
finite union of sets, that for the sake of clarity can be assumed to have the following form:
$$\bigwedge_{1\le i<n} x_1=s^{m_i}(x_{i+1}) \wedge \bigwedge_{1\le j\le k}
x_1\neq j.$$
Otherwise, suppose $\phi(\bar{x})$ is the congruence relation
$s(\bar{x})\equiv_l0$. If $(r_1,\ldots,r_n)$ is a tuple of integers that satisfy
the congruence relation,
then any tuple $(q_1,\ldots,q_n)$ for $q_i\in [r_i]_l$  satisfies this relation.
Note that we can only have finitely many solutions up to $l$-congruence.
Moreover, we may assume, by the Chinese remainder theorem, that $l$ is a power
of a prime number. Thus, by Lemma \ref{LemmaArith} and Remark \ref{Power2},
$\phi(\Z^n)\cap \Pi_q^n$ can be interpreted as a finite union of sets of the
form
$$\bigwedge_{1\le i \le n} Q_{k_i,m_i}(x_i) \land \mbox{"$x_i$ is not equal to
finitely
many elements"}.$$
This finishes the proof.
\end{proof}

\begin{lemma}\label{LemmaInd2}
The theory of $\mathcal{N}:=(\N,s,\{Q_{k,n}\}_{n<\omega,k<n})$ admits quantifier
elimination after adding a constant and a unary function symbol.
Moreover it is superstable and has Lascar rank one.
\end{lemma}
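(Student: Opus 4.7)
The plan is to show quantifier elimination after expanding the language with a constant naming $0$ and a unary function symbol $p$ interpreted as the predecessor, totalized by $p(0)=0$. In this expanded language, terms in one variable are words in $s$ and $p$, which for $x$ sufficiently large reduce to $s^m(x)$ for some $m\in\Z$, while for small $x$ the iterated predecessors collapse to $0$; accordingly, after splitting into finitely many cases ``$x=0$'', ``$x=1$'', \ldots, ``$x>N$'' determined by the depth of the formula, atomic formulas fall into three basic types: equalities $s^a(x)=s^b(y)$, equalities $s^a(x)=c$ for a specific $c\in\N$, and congruences $Q_{k,n}(s^a(x))$.

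For the quantifier elimination itself I would run the standard existential-elimination test. Given a quantifier-free $\varphi(x,\bar y)$ in disjunctive normal form, each disjunct either pins $x$ down to a specific term in the parameters or to a fixed natural number (in which case one substitutes and obtains a quantifier-free condition), or forces $x$ into a finite intersection of residue classes minus finitely many excluded points. Since every non-empty residue class in $\N$ is infinite, the latter case is realised in $x$ precisely when the parameter-free system of congruences is consistent, which is itself decided by a quantifier-free sentence involving only $0$ and the predicates $Q_{k,n}$.

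From quantifier elimination, the complete $1$-type of an element $a$ over a set $A$ is determined by either an identification $a=s^m(a')$ for some $a'\in A\cup\{0\}$ and $m\in\Z$ (an algebraic type), or, failing any such identification, the residue data $\{Q_{k,n}(a):n<\omega,\,k<n\}$, which corresponds to an element of the profinite completion $\hat\Z$. This gives $|S_1(A)|\leq|A|\cdot\aleph_0+2^{\aleph_0}$, so the theory is $\kappa$-stable for every $\kappa\geq 2^{\aleph_0}$ and hence superstable. For the Lascar rank, observe that a non-algebraic $1$-type over $A$ has a unique non-algebraic extension to any $B\supseteq A$, namely the one preserving the residue data and asserting that $a$ lies outside the $s$-orbit of every element of $B\cup\{0\}$; any other extension identifies $a$ with $s^m(b)$ for some $b\in B$ and is therefore algebraic. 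Hence every forking extension of a non-algebraic type is algebraic, yielding Lascar rank exactly one. The main technical obstacle will be the bookkeeping in the quantifier-elimination argument, in particular managing the edge cases arising from iterated predecessors collapsing to $0$.
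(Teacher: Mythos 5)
Your proposal is correct and follows essentially the same route as the paper: quantifier elimination after naming a constant and the predecessor function, then a counting-of-types argument (at most $|A|\cdot\aleph_0+2^{\aleph_0}$ one-types, via the residue data) for superstability, and the observation that every non-algebraic type has a unique non-algebraic extension, so all forking extensions are algebraic and the Lascar rank is one. The only differences (naming $0$ rather than $1$, totalizing the predecessor at $0$, describing $S_1(\emptyset)$ via the profinite completion) are cosmetic.
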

\begin{proof}
We add a constant to name $1$ and a function symbol $s^{-1}$ to name the
predecessor function;\
observe that both are definable in $\mathcal{N}$.

We prove elimination of quantifiers by induction on the complexity of the
formula $\phi$. It is enough to consider the case where $\phi(\bar{x})$ is a
consistent formula of the form
$\exists y\, \psi(\bar x,y)$, where $|y|=1$ and $\psi(\bar x,y)$ is a quantifier
free formula. We can clearly assume that $\psi$ is in normal disjunctive form.
Thus, since the negation of $Q_{k,n}$ is equivalent to
the conjunction $\bigvee_{l\neq k} Q_{l,n}$, it is enough to consider the case
where $\psi(\bar x,y)$ is a finite conjunction of formulas of the following
form:
$$Q_{k,n}(x_i) \ \land \ Q_{l,m}(y) \ \land \ x_i=c \ \land \ y=d \ \land
x_i\neq a \ \land \ y\neq b $$
$$ \land \ s^{p}(x_i)=x_j \ \land s^{r}(x_l)=y \land s^{f}(x_i)\neq x_j \land
s^{g}(x_l)\neq y$$

Furthermore, we split $\psi$ to a conjunction $\psi_0(\bar{x},y)\land
\psi_1(\bar{x})$, where $\psi_1$ is the conjunction of the atomic formulas of
$\psi$ that do not contain $y$.
Clearly we may assume that $\psi_0(\bar{x},y)$ does not contain instances of
the form $y=d$ or $s^{g}(x_i)= y$. We claim that $\exists y\psi_0(\bar{x},y)$
is equivalent to $\bar{x}=\bar{x}$.
Indeed, the projection of any formula of the form
$$Q_{k,n}(y)\land\bigwedge_{1\le i\le k} s^{g^i}(x)\neq y \land \bigwedge_{1\le
j\le l} y\neq d_j$$
is equivalent to $x=x$, thus the claim follows and $\psi(\bar{x},y)$ is
equivalent to $\psi_1(\bar{x})$. So, we obtain the first part of our statement.

Quantifier elimination allows us to prove by an easy counting types argument
that the theory is superstable. Fix a set of parameters $B$. Clearly any
non-algebraic type over $B$ extends the set $\pi(x)$ given by $\{s^n(x)\neq a:
a\in B, n\in \Z\}$. Hence, by the elimination of quantifiers,
we obtain that any complete non-algebraic type over $B$ (in one variable) is
equivalent to $\pi(x)\cup\pi_0(x)$, where $\pi_0(x)$ is a complete type without
parameters. Whence, $|S(B)|=|B| + |S(\emptyset)|$, as desired. In fact, any type
without parameters is determined by positive formulas since, as noted before,
the formula $\neg Q_{k,n}(x)$
is equivalent to a disjunction of formulas $Q_{l,n}(x)$ for $l\neq k$.
In addition, as for any $n\in \N$ the formula $Q_{k,n}(x)\wedge Q_{l,n}(x)$ is
inconsistent for distinct $l,k<n$, every complete type contains only one
predicate of the form $Q_{k,n}(x)$ for a given $n$. Thus, it is easy to see that
there are continuum many types without parameters; for instance, note that the
predicate $Q_{k,2^n}(x)$ splits into $Q_{k,2^{n+1}}(x)$ and
$Q_{k+2^n,2^{n+1}}(x)$ when $k$ is odd. Hence $|S(B)|=|B|+2^\omega$ and whence,
the theory is not $\omega$-stable.

Finally, again by quantifier elimination it is easy to see that the only
formulas that divide are the algebraic ones. This shows that the theory has
Lascar rank one; the details are left to the reader.
\end{proof}

Now, the proof of Proposition \ref{PropositionInduced} follows from Lemma
\ref{LemmaInd1} and \ref{LemmaInd2}.  We can prove our second main theorem.

\begin{proof}[Proof of Theorem \ref{MainTheorem2}]
It follows from Proposition \ref{PropositionInduced} together with Theorem
\ref{FactCasZieg} that the expanded structure $(\Z,+,0,\Pi_q)$ is superstable.
As it is a proper expansion of $(\Z,+,0)$, it has infinite Lascar rank by
Theorem \ref{MainTheorem}.
Whence, it remains to see that it has Lascar rank $\omega$. For this, it is
enough to show that any forking extension of the principal generic has finite
Lascar rank.

We shall work in an enough saturated extension of $(\Z,+,0,\Pi_q)$, where
$\Pi_q$ is interpreted as $\Pi_q'$. Let $p\in S(\emptyset)$ be the
generic of the connected component, and let $q=\tp(b/B)$ be an extension of $p$.
Consider a realization $a$ of $p|B$, and note using
Lemma \ref{LemmaInd2} that $\Pi_q'$ has Lascar rank one. Now, working in the
theory of $(\Z,+,0)$,
we obtain that $\tp(b/\Pi_q',B)$ is the principal generic whenever $b\not\in{\rm
acl}(\Pi_q',B)$. Moreover, if a finite tuple $d$ is
algebraic over $\Pi_q'\cup B$ and this is exemplified by some finite tuple
$(c_1,\ldots,c_n)$ in $\Pi_q'$, then we have in $\mathcal{T}h(\Z,+,0,\Pi_q)$
that ${\rm U}(d/B)\le {\rm U}(\bar c/B)<\omega$ as the set
$\Pi_q'\times\stackrel{n}{\ldots}\times\Pi_q'$ has Lascar rank $n$.
Hence $a\not\in{\rm acl}(\Pi_q',B)$ in the sense of $(\Z,+,0)$ and hence its
type over $\Pi_q'\cup B$ is the principal generic.
Thus, by Corollary \ref{CorTypes} we deduce that $p|B=\tp(b/B)$ whenever $b$ is
not algebraic in the sense of $(\Z,+,0)$ over $\Pi_q'\cup
B$. Therefore, in case that $\tp(b/B)$ is a forking extension of $p$ we conclude
that $b\in{\rm acl}(\Pi_q',B)$ and so $\tp(b/B)$ has finite Lascar rank, as
desired.
\end{proof}

One can see directly that the structure $(\Z,+,0,\Pi_q)$ has infinite Lascar
rank, without using Theorem \ref{MainTheorem},
showing that the set $\Pi_q+\stackrel{n}{\ldots}+\Pi_q$ has Lascar rank $n$.
This is left to the reader.

\section{Generalizations}
In this section we would like to mention a few generalizations, concerning
proper superstable expansions of the integers, that follow from our methods.
The ideas that lie behind our proof are transparent and clear. Firstly one
reduces the superstability of the expanded structure to the superstability
of the induced structure on the new predicate.
Secondly
one needs to understand the induced structure in this new predicate. It seems
that this is equivalent to understanding its intersection with arithmetic
progressions
and with the solution set of linear equations over the integers.

The following example is not very different in nature with the ones we already
gave in the previous section, thus we leave its proof as an exercise to the
interested reader.

\begin{ex}\label{sp}
Let $(k_1,\ldots, k_m)$ be a sequence of natural numbers and
$${\rm SP}_{(k_1,\ldots,k_m)}:=\{k_1^{\iddots^{k_m^n}} \ | \ n<\omega\}.$$
Then
$(\Z,+,0,{\rm SP}_{(k_1,\ldots,k_m)})$ is
superstable of Lascar rank $\omega$
\end{ex}

A more interesting example is the subset of the integers consisting of factorial
elements, i.e. ${\rm Fac}:=\{ n! \ | \ n<\omega\}\cup\{0\}$.

\begin{proposition}\label{Fac}
The structure $(\Z,+,0,{\rm Fac})$ is superstable of Lascar rank $\omega$.
\end{proposition}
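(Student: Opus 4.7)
The plan is to mirror the proof of Theorem \ref{MainTheorem2}. First I would reduce superstability of $(\Z,+,0,{\rm Fac})$ to superstability of ${\rm Fac}^{\rm ind}$ via Theorem \ref{FactCasZieg}. The key input is Remark \ref{Sparse}: since the consecutive differences $(n+1)!-n!=n\cdot n!$ are strictly increasing, the arguments of Lemma \ref{Consistency}, Lemma \ref{ReplaceSmallness}, Proposition \ref{Bounded} and Corollary \ref{CorTypes} transfer verbatim with $\Pi_q$ replaced by ${\rm Fac}$, so every $\mathcal{L}_P$-formula in $(\Z,+,0,{\rm Fac})$ is bounded.

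Next I would show that ${\rm Fac}^{\rm ind}$ is superstable of Lascar rank one. Following the strategy of Lemma \ref{LemmaInd1}, Remark \ref{NormalDisjunctiveForm} reduces the problem to describing $\phi(\Z^n)\cap {\rm Fac}^n$ when $\phi$ is either a linear equation $k_1 x_1+\ldots+k_n x_n=0$ or a congruence $k_1 x_1+\ldots+k_n x_n\equiv_l 0$. Since $n!\equiv 0\pmod l$ for every $n\geq l$, each congruence set agrees with either $\emptyset$ or the whole ${\rm Fac}^n$ outside a finite set of tuples involving only small factorials. For linear equations I would prove the following factorial analogue of Lemma \ref{Equations}: the solution set in ${\rm Fac}^n$ equals, up to a finite set of exceptions, a finite union of block-diagonals $\{\bar y\in{\rm Fac}^n \ | \ y_i=y_j\text{ whenever }i\sim_\Pi j\}$ ranging over the partitions $\Pi$ of $\{1,\ldots,n\}$ whose block-sums of the $k_i$'s all vanish. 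The proof fixes such a partition, passes to the reduced equation among distinct factorials, and uses that if $m^*!$ is the largest of these distinct factorials then $m^*!\geq m^*\cdot m!$ for every other $m!$ appearing; a non-vanishing reduced equation therefore forces $m^*$ to be bounded by a constant depending only on the coefficients. Once one observes that every singleton $\{m!\}$ is already $\emptyset$-definable in ${\rm Fac}^{\rm ind}$ by means of a suitable prime $p>m$ for which $m!\not\equiv 0\pmod p$, the whole induced structure reduces, via the bijection $n!\mapsto n$, to a pure infinite set with all its elements individually named, which is $\omega$-stable of Lascar rank one by the same counting-types argument as in Lemma \ref{LemmaInd2}.

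Finally, the rank identification follows verbatim from the last paragraph of the proof of Theorem \ref{MainTheorem2}: any forking extension of the generic of the connected component forces the realization to be algebraic over ${\rm Fac}$ together with the parameters in the sense of $(\Z,+,0)$, hence has finite Lascar rank, and together with Theorem \ref{MainTheorem} (which rules out finite global Lascar rank for any proper superstable expansion of $(\Z,+,0)$) this yields the value $\omega$. The main obstacle I anticipate is the careful proof of the factorial analogue of Lemma \ref{Equations}: unlike the case of $\Pi_q$, where solution sets are generated by positive powers of a single tuple, here solutions arise from the more delicate combinatorics of balanced partitions of the indices, and one must bookkeep the contribution of each partition separately. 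Nevertheless, the super-exponential growth of factorials makes each non-balanced sub-equation amenable to an elementary bound.
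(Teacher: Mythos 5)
Your overall route is the same as the paper's: reduce superstability of $(\Z,+,0,{\rm Fac})$ to that of ${\rm Fac}^{\rm ind}$ via the sparseness observation of Remark \ref{Sparse} and Theorem \ref{FactCasZieg}, note that congruence classes meet ${\rm Fac}$ in finite or cofinite sets, analyse linear equations through partitions of the coordinates and the super-exponential growth of factorials, conclude that the induced structure is essentially equality (hence strongly minimal, of Lascar rank one), and obtain rank $\omega$ by the forking argument from the proof of Theorem \ref{MainTheorem2}. This is precisely the paper's Lemmata \ref{1}, \ref{2}, \ref{3} and the concluding discussion.

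There is, however, one concrete misstatement in your key combinatorial step: the claim that $S\cap{\rm Fac}^n$ equals, up to a \emph{finite set of exceptional tuples}, a finite union of block-diagonals indexed by partitions all of whose block-sums vanish, is false. Take $2x_1-x_2+x_3-x_4=0$: since the coefficients sum to $1$, no partition of $\{1,2,3,4\}$ has all block-sums zero, yet $(1!,2!,c,c)$ is a solution for every $c\in{\rm Fac}$, so infinitely many solutions escape your description. What your growth argument actually proves (and what the paper's Lemma \ref{3} states) is a statement block by block: within each equality-pattern class $X_{\mathcal P}$, the coordinates lying in blocks whose coefficient sum is nonzero take only finitely many values, while the zero-sum blocks vary freely subject to the diagonal conditions. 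Consequently $S\cap{\rm Fac}^n$ is a finite union of sets of the form (finite set on some coordinates) $\times$ (diagonal on the remaining blocks), which is still definable from equality together with constants, so your final conclusion that ${\rm Fac}^{\rm ind}$ is strongly minimal survives once the lemma is restated this way. A smaller slip: a prime $p>m$ with $m!\not\equiv 0 \pmod p$ does not by itself isolate $\{m!\}$, since other factorials $n!$ with $n<p$ may fall in the same class modulo $p$; use instead ${\rm Fac}\cap[m!]_{(m+1)!}=\{m!\}$, or simply observe that strong minimality does not require naming each element, only that definable sets be boolean combinations of diagonals and finite sets.
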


We first note that the set ${\rm Fac}$ satisfies
the following:

\begin{lemma}
A finite union of sets of the form
$$\{k_0+k_1\cdot\alpha_1+\ldots+ k_s\cdot\alpha_s \ | \
\alpha_1,\ldots,\alpha_s\in {\rm Fac}\},$$ where $k_0,\ldots,k_s$ are integers,
 cannot cover any coset of any (non-trivial) subgroup
of $\Z$.
 \end{lemma}
\begin{proof}
Suppose otherwise that the coset $m+n\Z$ is contained in such a finite union, and notice that we may assume that $m=0$.
By the Pigeonhole principle there are integers $\lambda_0,\ldots,\lambda_l$ determining one of these sets, a prime $p$ greater than $\lambda_0,\ldots,\lambda_l$
and an infinite subset $I_0$ of $\mathbb N$ such that $\{np^k\}_{k\in I_0}$ is contained in the set
$$
\left\{\lambda_0+\lambda_1\cdot\alpha_1+\ldots+\lambda_l\cdot\alpha_l \ | \ \alpha_1,\ldots,\alpha_l\in{\rm Fac} \right\}.
$$
Let $\alpha_1(k),\ldots,\alpha_l(k)$ denote the factorial numbers such that
$$
np^k=\lambda_0+\lambda_1\cdot\alpha_1(k)+\ldots+\lambda_l\cdot\alpha_l(k).
$$
Now, suppose that there is some infinite subset $I$ of $I_0$ such that for some $j$ the set $\{\alpha_j(k)\}_{k\in I}$ is finite.
Without loss of generality, we may assume $j=l$. Thus, by the Pigeonhole principle there is some factorial $\alpha$ and some infinite subset $I'$ of $I$ such that
$$
np^k=\lambda_0+\lambda_l\cdot\alpha+\lambda_1\cdot\alpha_1(k)+\ldots+\lambda_{l-1}\cdot\alpha_{l-1}(k),
$$
for $k$ in $I'$. Hence, after replacing $\lambda_0$ by $\lambda_0+\lambda_l\cdot\alpha$ and $I$ by a suitable infinite subset,
iterating this process, we may assume that for any infinite subset $I$ of $I_0$ the set $\{\alpha_j(k)\}_{k\in I}$ is unbounded for $1\le j\le l$.
Thus, we can find recursively on $j$ an infinite subset $I_j$ of $I_{j-1}$ such that $\alpha_1(k),\ldots,\alpha_j(k)$ are greater than $p!$ for every $k$ in $I_j$.
In particular, there is a natural number $k$, in $I_l$, for which the factorial numbers $\alpha_1(k),\ldots,\alpha_l(k)$ are greater than $p!$.
Consequently, as $p$ clearly divides $\lambda_1\cdot\alpha_1(k)+\ldots+\lambda_l\cdot\alpha_l(k)$, it also divides $\lambda_0$, a contradiction unless $\lambda_0=0$.
Therefore, we have shown that the set $\{np^k\}_{k\in I_0}$ is contained in
$$
\left\{\lambda_1\cdot\alpha_1+\ldots+\lambda_l\cdot\alpha_l \ | \ \alpha_1,\ldots,\alpha_l\in{\rm Fac} \right\}.
$$
However, this yields a contradiction since for arbitrarily large $k$ we can find a prime $q$ dividing $\lambda_1\cdot\alpha_1(k)+\ldots+\lambda_l\cdot\alpha_l(k)$
but not $np^k$. \end{proof}
Therefore, a similar proof as in Lemma \ref{Consistency} gives:

\begin{lemma}\label{1}
Let $\mathcal{L}$ be the language of groups and $P(x)$ be a unary predicate. Let
$\mathcal{Z}:=(\Z,+,0,{\rm Fac})$ be an $\mathcal{L}_P$-structure.
Then every $\mathcal{L}_P$-formula in $\mathcal{Z}$ is bounded.
\end{lemma}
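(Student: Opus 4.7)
The plan is to mirror the argument that established Proposition \ref{Bounded}, with ${\rm Fac}$ playing the role of $\Pi_q$. The first step is to verify that ${\rm Fac}$ satisfies the hypothesis of Remark \ref{Sparse}: writing $a_n = n!$, one has $a_{n+1}-a_n = n\cdot n!$, which is strictly increasing for $n\geq 1$. Consequently, consecutive elements of ${\rm Fac}$ are eventually arbitrarily far apart, and the proof of Lemma \ref{Consistency} transfers verbatim. The essential combinatorial content, namely that a finite union of sets of the form $\{k+\ell\cdot\alpha : \alpha\in{\rm Fac}\}$ cannot cover any coset of a non-trivial subgroup of $\Z$, follows from the interval-counting argument given there, which only uses the sparsity of the sequence and not the specific multiplicative structure of $\Pi_q$.

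The second step is to combine the ${\rm Fac}$-analogue of Lemma \ref{Consistency} with the nfcp of $(\Z,+,0)$ to obtain the ${\rm Fac}$-analogue of Lemma \ref{ReplaceSmallness}: for any $\mathcal{L}$-formula $\phi(\bar{x},y,\alpha)$ there is some $k<\omega$ such that
$$\mathcal{Z}\models\forall\bar{x}\bigl(\forall\alpha_0\in P\cdots\forall\alpha_k\in P\,\exists y\,\bigwedge_{i\leq k}\phi(\bar{x},y,\alpha_i)\,\to\,\exists y\,\forall\alpha\in P\,\phi(\bar{x},y,\alpha)\bigr).$$
Here the nfcp reduces consistency of $\{\phi(\bar{b},y,\alpha) : \alpha\in{\rm Fac}\}$ to $k$-consistency, and the realisation in $\Z$ is supplied by the previous step.

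The third and final step is to pass from this uniform bounding principle to the boundedness of arbitrary $\mathcal{L}_P$-formulas. This is carried out by induction on formula complexity, exactly as in \cite[Proposition 2.1]{CasaZie} (the argument behind Proposition \ref{Bounded}). The main point requiring attention along the way is verifying that the combinatorial claim in Lemma \ref{Consistency} is truly insensitive to the arithmetic of $\Pi_q$ and depends only on the eventual strict growth of consecutive gaps; once this is confirmed, which is essentially the content of Remark \ref{Sparse}, the entire chain of implications from Lemma \ref{Consistency} through Proposition \ref{Bounded} applies to ${\rm Fac}$ with no further modification, and the lemma follows.
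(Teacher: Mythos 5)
Your proposal is correct and follows exactly the route the paper intends: the paper's own justification of this lemma is simply the observation that ${\rm Fac}$ has eventually strictly increasing gaps (Remark \ref{Sparse}), so the arguments of Lemma \ref{Consistency}, Lemma \ref{ReplaceSmallness} and Proposition \ref{Bounded} go through with ${\rm Fac}$ in place of $\Pi_q$. Your check that $a_{n+1}-a_n=n\cdot n!$ is strictly increasing, together with the remark that the interval-counting argument uses only sparsity, is precisely the verification the paper leaves implicit.
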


We will next prove that the induced structure on ${\rm Fac}$ comes from
equality alone.

\begin{lemma}\label{2}
Let $k<n$ be natural numbers and let $[k]_n$ denote the congruence class of $k$
modulo $n$.
Then ${\rm Fac}\cap [k]_n$ is either finite or co-finite in ${\rm Fac}$.
\end{lemma}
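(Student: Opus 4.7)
The plan is to exploit the elementary divisibility property that $n \mid m!$ whenever $m \geq n$. This forces the tail of the factorial sequence to collapse into a single residue class modulo $n$.

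More concretely, I would first observe that for every $m \geq n$, the number $n$ appears as a factor in the product $m! = 1 \cdot 2 \cdots n \cdots m$, so $m! \equiv 0 \pmod{n}$. Thus all but finitely many elements of $\mathrm{Fac}$ (namely, all $m!$ with $m < n$, together with the element $0$) lie in the residue class $[0]_n$.

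Then I would split into two cases according to whether $k \equiv 0 \pmod{n}$. If $k=0$, then $\mathrm{Fac}\cap [0]_n$ contains $m!$ for every $m\geq n$, so the complement $\mathrm{Fac}\setminus [0]_n$ is contained in the finite set $\{m! : m < n\}$, and hence $\mathrm{Fac}\cap [k]_n$ is co-finite in $\mathrm{Fac}$. Otherwise, if $1\leq k < n$, then no $m!$ with $m \geq n$ can satisfy $m! \equiv k \pmod{n}$, since $m!\equiv 0\not\equiv k \pmod n$; therefore $\mathrm{Fac}\cap [k]_n \subseteq \{0\}\cup\{m! : m< n\}$, which is finite.

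There is no real obstacle here: the argument is a one-line observation about when $n$ divides $m!$, followed by a trivial case split. The only thing to be mildly careful about is including the element $0\in\mathrm{Fac}$ and the finitely many small factorials in the bookkeeping, but these do not affect the finite/co-finite dichotomy.
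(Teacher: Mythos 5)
Your proof is correct and follows essentially the same route as the paper: the paper simply observes that the intersection is co-finite when $k=0$ and finite otherwise, and your divisibility argument ($n \mid m!$ for $m\geq n$) is exactly the detail behind that observation.
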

\begin{proof}
It is easy to see that when $k$ is $0$ the intersection will be co-finite in
${\rm Fac}$, while in any other case the intersection will be finite.
\end{proof}

Given an equation $k_1x_1+\ldots+k_nx_n=0$ over the integers and a partition
$\mathcal P=\{I_j\}_{j\le l}$ of $\{1,\ldots,n\}$, we denote by $X_{\mathcal P}$
the set of solutions $(m_1!,\ldots,m_n!)$ such that $m_i=m_k$ if and only if
$i,k\in I_j$ for some $j\le l$.
\begin{lemma}\label{3}
Let $k_1x_1+\ldots+k_nx_n=0$ be an equation over the integers and let $\mathcal
P=\{I_j\}_{j\le l}$ be a partition of $\{1,\ldots,n\}$. Then the projection of
$X_{\mathcal P}$ on its $I_j$-coordinates is an infinite set if and only if
$\sum_{i\in I_j} k_i=0$.
\end{lemma}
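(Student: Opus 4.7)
The plan is to parameterize $X_{\mathcal{P}}$ by the pairwise distinct values $m_j$ common to all indices in block $I_j$. Setting $K_j := \sum_{i \in I_j} k_i$, the equation $k_1 x_1 + \ldots + k_n x_n = 0$ restricted to $X_{\mathcal{P}}$ becomes
\[
\sum_{j \le l} K_j \, m_j! \;=\; 0, \qquad \text{with the } m_j \text{ pairwise distinct.}
\]
The projection of such a tuple onto its $I_j$-coordinates is entirely determined by the single value $m_j$, so the projection on $I_j$ is infinite if and only if infinitely many values of $m_j$ appear among the solutions.

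For the forward implication, I would prove the contrapositive: if $K_j \neq 0$, then $m_j$ is uniformly bounded across all solutions. Let $S = \{j' : K_{j'} \neq 0\}$, so that only these indices actually contribute to the equation. In a fixed solution, pick the index $j^* \in S$ attaining the maximum $M := \max_{j' \in S} m_{j'}$. The distinctness of the $m_{j'}$'s forces $m_{j'} \le M - 1$ for all other $j' \in S$, so comparing $|K_{j^*}| \cdot M!$ to the remaining terms and dividing through by $(M-1)!$ yields
\[
M \;\le\; \frac{1}{|K_{j^*}|} \sum_{j' \in S \setminus \{j^*\}} |K_{j'}|,
\]
a bound depending only on the equation. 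Hence if $j \in S$, then $m_j \le M$ is bounded and the projection on $I_j$ is finite.

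For the converse, assume $K_j = 0$; in the case of interest $X_{\mathcal{P}}$ is non-empty, so pick some $(m_1!,\ldots,m_n!) \in X_{\mathcal{P}}$. Since $K_j = 0$, the equation $\sum_{j'} K_{j'} m_{j'}! = 0$ no longer involves $m_j$, so replacing $m_j$ by any non-negative integer distinct from all $m_{j'}$ with $j' \neq j$ again produces an element of $X_{\mathcal{P}}$, preserving both the equation and the equality pattern imposed by $\mathcal{P}$. There are infinitely many such replacements, so the projection on $I_j$ is infinite.

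The main obstacle is the forward direction, and specifically the need to pass to the sub-index set $S$ before applying the estimate: this is what guarantees $|K_{j^*}| > 0$ so that the factorial domination $M! / (M-1)! = M$ can be translated into a genuine bound on $M$. Once this reduction is in place, the argument is essentially a single-step application of the super-polynomial growth of factorials.
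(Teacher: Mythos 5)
Your proof is correct, and its core mechanism is the same as the paper's: after collapsing each block $I_j$ to a single value and discarding the blocks with $K_j=\sum_{i\in I_j}k_i=0$, the largest remaining factorial dominates the other terms, which is incompatible with the equation unless that largest value is small. The difference is in how this is organized. The paper argues by contradiction: it enumerates $X_{\mathcal P}$, uses the pigeonhole principle to extract infinitely many solutions on which the nonzero-coefficient blocks appear in a fixed order, and then derives a contradiction once the top value is large enough (a step which tacitly needs the relevant block value to be unbounded along the chosen subfamily). You instead prove a direct, explicit uniform bound $M\le\frac{1}{|K_{j^*}|}\sum_{j'\in S\setminus\{j^*\}}|K_{j'}|$ valid for every single element of $X_{\mathcal P}$, which avoids the enumeration and pigeonhole step altogether and is, if anything, cleaner and more robust. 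For the converse the routes also differ: the paper exhibits solutions directly by putting $0$ (which lies in ${\rm Fac}$) on the coordinates outside $I_j$ and letting the $I_j$-value range freely, whereas you perturb an existing element of $X_{\mathcal P}$ in its $j$-th block value; your version therefore needs the caveat that $X_{\mathcal P}\neq\emptyset$, which you state explicitly. This caveat is worth keeping in mind because, if the entries are required to be genuine factorials $m_i!\geq 1$, the set $X_{\mathcal P}$ can be empty even when $K_j=0$ (e.g.\ $x_1-x_2+5x_3=0$ with blocks $\{1,2\}$ and $\{3\}$), so the literal ``if'' direction needs either the paper's convention allowing the value $0\in{\rm Fac}$ or your nonemptiness hypothesis; since the application to ${\rm Fac}^{\rm ind}$ only uses the finiteness direction together with the covering of the solution set by the sets $X_{\mathcal P}$, this does not affect anything downstream.
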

\begin{proof}
Let $\mathcal P=\{I_j\}_{j\le l}$ be a partition of $\{1,\ldots,n\}$ and suppose
that $\sum_{i\in I_j} k_i=0$ for some $j\le l$. Clearly, there are infinitely
many solution of the form $(x_1,\ldots,x_n)$ with $x_i=0$ for $i\not\in I_j$ and
$x_i$ constant for $i\in I_j$. Hence, we get the result. For the converse,
assume for some $k\le l$ that the projection of $X_{\mathcal P}$ on its
$I_k$-coordinates yields an infinite set but $\sum_{i\in I_k} k_i$ is non-zero,
and let $X_{\mathcal P}$ be the set $\{(m_1(t)!,\ldots,m_n(t)!)\}_{t<\omega}$.
Set $s_j(t)$ to be the value of every $m_i(t)$ when $i\in I_j$, and note that
all $s_j(t)$'s are distinct by the definition of $X_{\mathcal P}$. It is clear
that
$$
\sum_{j\le l} \Big(\sum_{i\in I_j} k_i \Big) \cdot s_j(t)! =0.
$$
Now, let $J$ be the set of sub-indexes $j\le l$ for which $\sum_{i\in I_j} k_i$
is non-zero; note that $J$ is non-empty as $k\in J$ and also that
$$
\sum_{j\in J} \Big(\sum_{i\in I_j} k_i \Big) \cdot s_j(t)! =0.
$$
By assumption, this equation holds for all $t<\omega$ and so, by the pigeonhole
principle we can find an enumeration of $J=\{j_1,\ldots,j_r\}$ such that
$s_{j_1}(t)>\ldots>s_{j_r}(t)$ for infinitely many values of $t$. Additionally,
for some of these $t$'s we have that $s_{j_1}(t)>|\sum_{i\in I_{j_2}}
k_i+\ldots+\sum_{i\in I_{j_r}} k_i|$ and thus
$$
\left|\Big(\sum_{i\in I_{j_1}}k_i\Big)\cdot
s_{j_1}(t)!\right|>\left|\Big(\sum_{i\in I_{j_2}}k_i\Big)\cdot
s_{j_2}(t)!+\ldots+\Big(\sum_{i\in I_{j_r}}k_i\Big)\cdot s_{j_r}(t)!\right|,
$$
a contradiction. Hence, we get the result.
\end{proof}

If $k_1x_1+\ldots+k_nx_n=0$ is an equation over the integers and $S\subseteq
\Z^n$ is its solution set, observe that $S$ is precisely the finite union of all
$X_{\mathcal P}$. Therefore, Lemmata \ref{2} and \ref{3} give that all the
induced structure
on ${\rm Fac}$ comes from equality alone, thus ${\rm Fac}^{\rm ind}$ is
strongly minimal and
Proposition \ref{Fac} follows.

Our paper can be seen as opening the path for answering the following
interesting questions:

\begin{question}
\
\begin{itemize}
 \item (J. Goodrick) Characterize the subsets $\Pi\subset\Z$, for which $(\Z, +,
0, \Pi)$ is superstable.
 \item Characterize the subsets $\Pi\subset\Z$, for which $(\Z, +, 0, \Pi)$ is
stable.
\end{itemize}

\end{question}

\bibliographystyle{plain}

\end{document}